\theoremstyle{plain}
\newtheorem{theorem}{Theorem}
\newtheorem{remark}[theorem]{Remark}
\newtheorem{definition}[theorem]{Definition}
\newtheorem{example}[theorem]{Example}
\numberwithin{equation}{section}
\numberwithin{theorem}{section}
\numberwithin{figure}{section}
\newcounter{mycount}
\newenvironment{letlist}{\begin{list}{\rm(\alph{mycount})}%
   {\usecounter{mycount}\labelwidth=1cm\itemsep 0pt}}{\end{list}}
\newcommand\s{\sigma}
\newcommand\oo{\infty}
\newcommand\HH{{\mathbb H}}
\newcommand\NN{{\mathbb N}}
\newcommand\sA{{\mathcal A}}
\newcommand\sAc{\sA_{\text{\rm c}}}
\newcommand\ZZ{{\mathbb Z}}
\newcommand\RR{{\mathbb R}}
\newcommand\wt{\widetilde}
\renewcommand\a{\alpha}
\newcommand\Si{\Sigma}
\newcommand\g{\gamma}
\newcommand\resp{respectively}
\begin{document}
\title[SAWs on cubic graphs]
{Self-avoiding walks on cubic graphs and local transformations}
\author[Grant]{Benjamin Grant}
\address{Department of Mathematics, University of Connecticut} 
\email{\{benjamin.grant, zhongyang.li\}@uconn.edu}
\urladdr{https://sites.google.com/view/benjamingrant}
\urladdr{https://mathzhongyangli.wordpress.com/}

\author[Li]{Zhongyang Li}

\begin{abstract}
Despite its elementary definition, the self-avoiding walk (SAW) poses notoriously hard
enumerative problems: exact connective constants are known for only a handful of infinite
graphs, notably the honeycomb lattice \cite{ds}.
We establish a general substitution principle for SAWs on infinite connected quasi-transitive
cubic graphs under port-transitive vertex replacements, where each degree-$3$ vertex is
replaced by a fixed finite three-port gadget.
Writing $g(x)$ for the associated two-port SAW series, we prove that for $G_1=\phi(G)$,
\[
\mu(G)^{-1}=g\bigl(\mu(G_1)^{-1}\bigr),
\]
equivalently $\mu(G_1)^{-1}$ is the unique solution $x\in(0,1)$ of $g(x)=\mu(G)^{-1}$,
thereby extending the Fisher-triangle relation of Grimmett--Li to arbitrary symmetric
three-port gadgets.
We also obtain the corresponding identity for bipartite graphs when one or both colour
classes are transformed, and show that the critical exponents $\gamma$ and $\eta$ (and
$\nu$ under a standard regularity hypothesis) are invariant.
For explicit gadget families, including complete-graph gadgets $K_N$ and Fisher-type
constructions, these identities turn base graphs with known $\mu$ into infinite families
of new quasi-transitive graphs whose connective constants are determined exactly as the
unique roots of explicit algebraic equations.
\end{abstract}

\date{20 Jan 2026} 

\keywords{Self-avoiding walk, connective constant, cubic graph, 
Fisher transformation,  
quasi-transitive graph}
\subjclass[2010]{05C30, 82B20}

\maketitle

\section{Introduction}\label{sec:intro}

A \emph{self-avoiding walk} (SAW) on a graph is a finite path that visits no vertex more
than once.  Introduced as a model for polymer chains, SAWs have become central objects
in probability, combinatorics, and statistical physics; see, for example, \cite{f,ms}.
Let $G$ be an infinite connected quasi-transitive graph, and let $\sigma_n(v)$ denote the
number of $n$-step SAWs starting at $v$.  A classical theorem of Hammersley implies that
the limit
\[
\mu(G)=\lim_{n\to\infty}\sigma_n(v)^{1/n}
\]
exists and is independent of $v$; the constant $\mu(G)$ is the \emph{connective constant}
of $G$.  Exact values (or explicit characterizations) of $\mu(G)$ are known only in a small
number of nontrivial cases.

\medskip
\noindent\textbf{Local vertex replacements and a substitution principle.}
We study SAWs on cubic graphs under a class of vertex-replacement operations.
A \emph{local transformation} $\phi$ replaces each degree-$3$ vertex by a fixed finite
\emph{three-port gadget} whose three ports attach to the incident edges, subject to a
port-transitivity condition on the gadget automorphism group (Definition~\ref{df31}).
The classical example is the \emph{Fisher transformation}, where each vertex is replaced
by a triangle.

Grimmett and Li \cite{GrimmettLi2013Fisher} analysed SAWs under the Fisher transformation,
obtaining an explicit functional relation between connective constants and proving
invariance of key critical exponents under iterated Fisher transforms.
The aim of the present paper is to establish a general \emph{exact substitution framework} that
(i) extends the Fisher-triangle analysis to \emph{arbitrary} port-transitive three-port gadgets,
and (ii) treats the \emph{bipartite} setting in which the transformation is applied to one
colour class, or to both classes with possibly different gadgets.  The resulting identities
are global (they describe radii of convergence and critical exponents) but are driven by
finite, explicitly computable local data.

\medskip
\noindent\textbf{A two-port gadget series and connective constants.}
Fix $\phi$, and let $H$ denote its gadget.  We associate to $H$ a two-port generating function
\begin{equation}\label{eq:intro-g}
g(x)=\sum_{\pi} x^{|\pi|},
\end{equation}
where $\pi$ ranges over SAWs that enter $H$ through one prescribed port-edge and exit
through another while otherwise remaining inside $H$ (Definition~\ref{df32}).
Our first main theorem (Theorem~\ref{thm:main2}) shows that this purely local series governs
the connective constant after transforming \emph{every} vertex:
if $G$ is infinite, connected, quasi-transitive, and cubic, and $G_1=\phi(G)$, then
\begin{equation}\label{eq:intro-mu}
\mu(G)^{-1}=g\bigl(\mu(G_1)^{-1}\bigr),
\end{equation}
equivalently $\mu(G_1)^{-1}$ is the unique $x\in(0,1)$ solving $g(x)=\mu(G)^{-1}$.

\medskip
\noindent\textbf{Critical exponents on quasi-transitive graphs.}
To avoid assuming an ambient Euclidean dimension, we adopt graph-theoretic definitions of
critical exponents $\gamma,\eta,\nu$ via finiteness of natural susceptibility-type series
(Section~\ref{sec:fisher}).  Our second main theorem (Theorem~\ref{thm:main2'}) proves that
local transformations preserve these exponents under mild hypotheses: $\gamma$ and $\eta$
are invariant in general, and $\nu$ is invariant provided the SAW counts satisfy a standard
regularity hypothesis with a common slowly varying correction.

\medskip
\noindent\textbf{Bipartite transformations.}
If $G$ is bipartite and the transformation is applied to one colour class (or, in the fully
cubic case, to both classes with possibly different gadgets), we obtain an analogous
relation (Theorem~\ref{thm:main3}):
\[
\mu(G)^{-2}=h\bigl(\mu(G_{\mathrm e})^{-1}\bigr),
\]
with $h(x)=xg(x)$ in the one-class case and $h(x)=g_{\phi_1}(x)\,g_{\phi_2}(x)$ in the two-class case.
These formulas turn known values of $\mu(G)$ into explicit equations for $\mu(G_{\mathrm e})$.

\medskip
\noindent\textbf{Examples and computations.}
Section~\ref{sec:examples} develops explicit gadget families and worked base-graph examples.
We emphasize complete-graph gadgets $K_N$ (yielding closed-form polynomials $g_N$), a mixed
bipartite transformation on the hexagonal lattice (white $K_4$, black Fisher triangles),
and an iterated transformation example that exhibits convergence of the sequence of
connective constants to a gadget-determined fixed point.

\medskip
\noindent\textbf{Organization.}
Section~\ref{sec:notation} introduces notation.
Section~\ref{sec:fisher} defines local transformations and the associated series $g(x)$ and
states Theorems~\ref{thm:main2}--\ref{thm:main3}.
Proofs appear in Sections~\ref{sec:main2} and~\ref{sec:proof3}.
Section~\ref{sec:examples} contains explicit examples, computations, and remarks on iteration.

\section{Notation}\label{sec:notation}

All graphs studied henceforth in this paper will be assumed infinite, connected,
and simple (in that they have neither loops nor multiple edges). 
An edge $e$ with endpoints $u$, $v$ is written $e=\langle u,v \rangle$.
If $\langle u,v \rangle \in E$, we call $u$ and $v$ \emph{adjacent}
and write $u \sim v$.
The \emph{degree} of vertex $v$ is the number of edges
incident to $v$, denoted $\deg(v)$. A graph is called \emph{cubic}
if all vertices have degree $3$.
The \emph{graph-distance} between two vertices $u$, $v$ is the number of edges
in the shortest path from $u$ to $v$, denoted $d_G(u,v)$.

The automorphism group of the graph $G=(V,E)$ is
denoted $\sA=\sA(G)$.
The graph $G$ is
called \emph{quasi-transitive} if there exists a finite subset $W \subseteq V$ such that,
for $v \in V$ there exists $\a\in\sA $ such
that $\a v \in W$. We call such $W$ a \emph{fundamental domain},
and shall normally (but not invariably) take $W$
to be minimal with this property. The graph is 
called \emph{vertex-transitive} 
(or \emph{transitive}) if the singleton set $\{v\}$ is a fundamental
domain for some (and hence all) $v \in V$.

A \emph{walk} $w$ on $G$ is
an alternating sequence $v_0e_0v_1e_1\cdots e_{n-1} v_n$ of vertices $v_i$
and edges $e_i$ such that $e_i=\langle v_i, v_{i+1}\rangle$.
We write $|w|=n$ for the \emph{length} of $w$, that is, the number of edges in $w$.

Let $n \in \NN$, the natural numbers. 
A $n$-step \emph{self-avoiding walk} (SAW) 
on $G$ is  a walk containing $n$ edges
that includes no vertex more than once.
Let $\s_n(v)$ be the number of $n$-step SAWs 
 starting at $v\in V$. It was shown by Hammersley \cite{jmhII}
that, if $G$ is quasi-transitive, there exists a constant $\mu=\mu(G)$,
called the \emph{connective constant} of $G$,
such that
\begin{equation}
\label{connconst}
\mu= \lim_{n\to\oo}  \s_n(v)^{1/n}, \qquad v \in V.
\end{equation}

It will be convenient to consider also SAWs starting at `mid-edges'. We identify the
edge $e$ with a point (also denoted $e$) placed at the middle of $e$, 
and then consider walks that start and end at 
these mid-edges. Such a  walk is \emph{self-avoiding} if it visits no vertex or mid-edge 
more than once, and its \emph{length} is the number of vertices visited.

The minimum of two reals $x$, $y$ is denoted $x \wedge y$, and the maximum
$x \vee y$.

\section{Local transformation} \label{sec:fisher}

\begin{definition}[Local transformation]\label{df31}
Let $G=(V,E)$ be a simple graph, and let $v\in V$ be a vertex of degree $3$.
A \emph{local transformation} $\phi$ acts at $v$ by replacing $v$ with a finite
graph (a \emph{gadget}) $\phi(v)$ that satisfies the following conditions:
\begin{enumerate}
\item The gadget $\phi(v)$ comes with three distinguished \emph{outer vertices},
      each of which is attached to exactly one of the three edges incident to $v$
      (one outer vertex per incident edge).
\item For any degree-$3$ vertices $v_1,v_2\in V$, the gadgets $\phi(v_1)$ and $\phi(v_2)$
      are isomorphic (via an isomorphism that sends outer vertices to outer vertices).
\item Let $w_1,w_2,w_3$ be the three outer vertices of $\phi(v)$. There exists a subgroup of the automorphism group
      of the gadget that
    \begin{itemize}  
\item       acting transitively on $\{w_1,w_2,w_3\}$; equivalently, for any
      $w_i,w_j\in\{w_1,w_2,w_3\}$ there exists an automorphism of $\phi(v)$ mapping $w_i$ to $w_j$; and
      \item preserving the set of distinguished outer vertices $\{w_1,w_2,w_3\}$
      \end{itemize}
\end{enumerate}
The resulting graph is denoted by $\phi(G)$. 
\end{definition}

One special case of local transformations defined above is the so-called Fisher transformation; in which each vertex is replaced by a triangle.
This transformation has been valuable in the study of
the relations between Ising, dimer, and general vertex models 
(see \cite{bdet,fisher,zli,li}), and more recently of SAWs
on the Archimedean lattice denoted $(3,12^2)$ (see \cite{g,jg}).

In the remainder of this paper, we make use of the more general local transformation in the context of 
SAWs and the connective constant. It will be applied to cubic graphs, of which the 
hexagonal and square/octagon lattices are examples.

It is convenient to work with graphs with well-defined connective constants, and
to this end we assume 
that $G=(V,E)$ is quasi-transitive and connected, so that its connective constant
is given by \eqref{connconst}.
We write $\phi(G)$ for the graph obtained
from the cubic graph $G$ by applying the local transformation at every vertex. 
Obviously the automorphism group of $G$ induces an automorphism subgroup
of $\phi(G)$. 

\begin{definition}\label{df32}
Let $G=(V,E)$ be a simple graph. For a degree-$3$ vertex $v\in V$, let $\phi$ be a local
transformation as in Definition~\ref{df31}. Let $e_1,e_2,e_3$ be the three edges of $G$
incident to $v$. Define
\begin{equation}\label{dgx}
g(x)\;=g_{\phi}(x)=\;\sum_{\pi\in \Sigma_{\phi(v),e_1,e_2}} x^{|\pi|},
\end{equation}
where $\Sigma_{\phi(v),e_1,e_2}$ denotes the set of all self-avoiding walks (SAWs) that
start at the midpoint of $e_1$, end at the midpoint of $e_2$, and otherwise visit only
vertices of the gadget $\phi(v)$. $|\pi|$ denote the total number of vertices in the SAW $\pi$.

By Definition~\ref{df31}(2), the gadgets $\phi(v)$ are mutually isomorphic for all
degree-$3$ vertices $v$, so the power series $g(x)$ is independent of the choice of $v$.
Moreover, by Definition~\ref{df31}(3), the value of $g(x)$ is the same if
$\{e_1,e_2\}$ is replaced by any other pair among
$\{e_1,e_3\}$ and $\{e_2,e_3\}$ in~\eqref{dgx}.
\end{definition}

\begin{theorem}\label{thm:main2}
Let $G$ be an infinite, quasi-transitive, connected, cubic graph,  $G_1 = \phi(G)$.
Then
 The connective constants $\mu_1$ of $G_1$ and $\mu$ of $G$ satisfy
$\mu^{-1} = g(\mu_{1}^{-1})$ where $g(x)$ is defined as in (\ref{dgx}).
\end{theorem}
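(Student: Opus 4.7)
My plan is to prove the relation by constructing a natural bijection between a restricted class of SAWs on $G_1$ and \emph{lifts} of mid-edge SAWs on $G$, and then extracting the connective constant identity from the resulting generating-function equation via a radius-of-convergence comparison.

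First I would set up mid-edge generating functions. Fix a \emph{port mid-edge} $e$ --- i.e., the midpoint of an edge of $G$, viewed as an edge of $G_1$. For $H\in\{G,G_1\}$ define
\[
Z_H(x;e)\;=\;\sum_{\omega} x^{|\omega|},
\]
the sum over mid-edge-to-mid-edge SAWs $\omega$ on $H$ starting at $e$, with $|\omega|$ counting vertices visited. Because both $G$ and $G_1$ are infinite, connected and quasi-transitive (the automorphism group of $G$ lifts to an automorphism subgroup of $G_1$, as noted in the excerpt), Hammersley's theorem together with a standard comparison between vertex-rooted and mid-edge-rooted SAW counts gives radii of convergence $\mu^{-1}$ and $\mu_1^{-1}$ respectively.

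Next I would establish the key bijection. Call a SAW $\omega$ on $G_1$ starting at $e$ \emph{aligned} if it also terminates at a port mid-edge. The crucial structural observation is that in an aligned SAW every gadget is visited at most once: each visit enters and exits through port mid-edges, consuming $2$ of the $3$ port mid-edges of the gadget, and self-avoidance forbids re-using any mid-edge, so a second visit would require at least $4$ distinct port mid-edges and is impossible. Consequently the sequence $v_1,\ldots,v_k$ of gadgets visited by $\omega$ is a mid-edge SAW on $G$ (adjacency coming from shared port edges, no repetitions), and the restriction of $\omega$ to each $\phi(v_i)$ lies in $\Sigma_{\phi(v_i),e_{i-1},e_i}$. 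Using Definition~\ref{df31}(3) to identify the three port-to-port generating functions with the single series $g(x)$, the bijection yields
\[
Z_{G_1}^{\mathrm{aligned}}(x;e)\;=\;\sum_{k\ge 0}\sigma_k^G(e)\,g(x)^k\;=\;Z_G\bigl(g(x);\,e\bigr),
\]
where $\sigma_k^G(e)$ is the number of $k$-vertex mid-edge SAWs on $G$ starting at $e$.

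Finally I would match radii of convergence. Trivially $Z_{G_1}^{\mathrm{aligned}}(x;e)$ has radius $\ge\mu_1^{-1}$. For the reverse inequality, any non-aligned SAW on $G_1$ ends inside some gadget $\phi(u)$; truncating at its last port mid-edge crossing writes $\omega=\omega'\cdot\tau$ with $\omega'$ aligned and $\tau$ a tail confined to $\phi(u)$ of length at most $M$ (the gadget size), so summing over the finitely many tails emanating from each port mid-edge gives $Z_{G_1}(x;e)\le(1+P(x))\,Z_{G_1}^{\mathrm{aligned}}(x;e)$ for a polynomial $P$ of degree $\le M$. Hence both series have radius $\mu_1^{-1}$. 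Since $g$ has nonnegative coefficients and $g(0)=0$, it is continuous and strictly increasing on $[0,\infty)$, so the radius of $Z_G(g(x);e)$ equals $g^{-1}(\mu^{-1})$. Equating this with $\mu_1^{-1}$ gives $g(\mu_1^{-1})=\mu^{-1}$. The hard part will be the structural claim that each gadget is visited at most once in an aligned SAW: although the port-counting argument is elementary, it must be executed with care to handle the boundary cases where $e$ or the terminating port mid-edge is itself a port of the gadget being analysed. Once this claim is in hand the bijection is immediate and the radius-matching argument is routine.
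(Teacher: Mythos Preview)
Your proposal is correct and follows essentially the same approach as the paper: restrict to \emph{aligned} (port-to-port) SAWs on $G_1$, show via a tail-trimming argument that their generating function has the same radius of convergence $\mu_1^{-1}$ as the full SAW series, establish the substitution identity $Z_{G_1}^{\mathrm{aligned}}(x)=Z_G(g(x))$ by contracting gadgets, and equate radii. Your explicit port-counting argument (three ports per gadget, two consumed per visit) for why the contracted walk is self-avoiding is a detail the paper leaves implicit, but otherwise the two proofs coincide.
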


We turn to the topic of \emph{critical exponents}, beginning with
a general introduction for the case when there exists a periodic, locally finite embedding
of $G$ into $\RR^d$ with $d \ge 2$. The case of general $G$ has not not been 
studied extensively, and most attention has been paid to the 
hypercubic lattice $\ZZ^d$.
It is believed 
 (when $d \ne 4$) that there is a power-order correction, 
in the sense that there exists $A_v>0$ and an exponent 
$\g \in \RR$ such that
\begin{equation}\label{defgamma}
\s_n(v)\sim A_v n^{\gamma-1}\mu^n \qquad \text{as } n \to\oo,
\qquad v \in V.
\end{equation}
Furthermore, 
the value of the exponent $\g$ is believed to depend on $d$ and not further on the 
choice of graph $G$.
When $d=4$, \eqref{defgamma} should hold with $\gamma=1$
and subject to the inclusion
on the right side of the logarithmic correction
term  $(\log n)^{1/4}$.
See \cite{bdgs,ms} for  accounts of critical exponents for SAWs.

Let $v \in V$ and
\begin{equation}
Z_{v,w}(x)=\sum_{n=0}^\oo \s_n(v,w)x^k, \qquad w \in V,\ x>0,
\label{defz}
\end{equation} 
where $\s_n(v,w)$ is the number of $n$-step SAWs with endpoints
$v$, $w$.
It is known under certain circumstances that the generating 
functions $Z_{v,w}$ have radius of convergence $\mu^{-1}$ 
(see \cite[Cor.\ 3.2.6]{ms}),
and it is believed that there exists an exponent $\eta$ and constants 
$A'_v>0$  such that
\begin{equation}\label{defeta}
Z_{v,w}(\mu^{-1})\sim A_v'{d_G(v,w)}^{-(d-2+\eta)}
\qquad \text{as } d_G(v,w)\to\oo.
\end{equation}

Let $\Si_n(v)$ be the set of $n$-step SAWs from $v$,
and write $\langle \cdot \rangle_n^v$ for expectation with respect to
uniform measure on $\Sigma_n(v)$. Let
$\|\pi\|$ be the graph-distance between the endpoints of 
a SAW $\pi$.
It is believed (when $d\ne 4$) that
there exists an exponent $\nu$ and constants
$A''_v>0$ such that  
\begin{equation}\label{defnu}
\langle \|\pi\|^2\rangle_n^v \sim A_v'' n^{2\nu}, \qquad v \in V.
\end{equation}
As above, this should hold for $d=4$ with $\nu=\frac12$ and subject to the
inclusion of the correction term $(\log n)^{1/4}$.

The above three exponents  are believed to be related to one another 
through the so-called \emph{Fisher relation}
\begin{equation}\label{fisher-rel}
\gamma = \nu(2-\eta).
\end{equation}

It is convenient to work with 
definitions of critical exponents that do not depend on
an assumption of dimensionality, and thus we proceed as follows.
Let $G$ be an infinite, connected, quasi-transitive graph with 
connective constant $\mu$ and fundamental domain $W$.
Let $X$ be the set of edges incident to vertices in  $W$,
and let $\Sigma$ be the set of SAWs on $G$ starting at mid-edges in $X$.
We define the function
\begin{equation*}\label{expatn}
Y(x,y)=
\sum_{\pi\in\Sigma}
\frac{x^{|\pi|}}{|\pi|^y}, \qquad x> 0,\ y\in\RR.
\end{equation*}
(The denominator is interpreted as 1 when $|\pi|=0$.)
For fixed $x$, $Y(x,y)$ is non-increasing in $y$. 
Let $\g=\g(G)\in[-\oo,\oo]$ be such that
$$
Y(\mu^{-1},y) \begin{cases} =\infty &\text{if } y<\gamma,\\ 
<\oo &\text{if } y>\g.
\end{cases}
$$
We shall assume that $-\oo < \g<\oo$. 
It will be convenient at times to  assume more about
the number $\s_n$ of $n$-step SAWs from mid-edges in $X$, 
namely that   there exist constants $C_i=C_i(W) \in(0,\infty)$ and a slowly varying function $L$ 
such that
\begin{equation}\label{def:slow}
C_1 L(n)n^{\gamma-1}\mu^n\leq \sigma_{n}\leq C_2 L(n)n^{\gamma-1}\mu^n, \qquad n \ge 1.
\end{equation}

Let
\begin{equation}\label{def0}
V(z)=\sum_{n=1}^{\infty}\frac{1}{n^{2z+1}} \langle \|\pi\|^2 \rangle_n,
\qquad z \in [-\oo,\oo],
\end{equation}
where $\langle \cdot \rangle_n$ denotes the uniform average over the
set $\Sigma_n$ of $n$-step SAWs in $\Sigma$.
Thus, $V(z)$ is non-increasing in $z$, and we let  
$\nu=\nu(G)\in[-\oo,\oo]$ be such that
$$
V(z) \begin{cases} =\infty &\text{if } z < \nu,\\
<\infty &\text{if } z > \nu.
\end{cases}
$$

Let $\a W$ denote the image of $W$ under an automorphism $\a\in\sA$, with incident
edges $\a X$, and let
\begin{equation*}
Z_\a(x)=\sum_{\pi\in\Sigma(\a)} x^{|\pi|},
\end{equation*}
where $\Sigma(\a)$ is the subset of $\Sigma$ containing SAWs ending 
at mid-edges in $\a X$.
We assume there exists $\eta =\eta(G) \in [-\oo,\oo]$ such that,
for any sequence of automorphisms $\a$ satisfying $d_G(W,\a W)\to\oo$,
\begin{equation}\label{dd1}
Z_\a(\mu^{-1})d_G(W,\a W)^{w} 
\begin{cases} \to 0 &\text{if } w<\eta,\\
\to\infty &\text{if } w>\eta. 
\end{cases}
\end{equation}
The $\eta$ of \eqref{defeta} should agree with that defined here when $d=2$.

It is easily seen that the values of $\g$, $\eta$, $\nu$ do not depend on the choice of
fundamental domain $W$. 

We consider now the effect on critical exponents of
the local transformation. 
Let $W_0$ be a minimal fundamental domain of $G_0 := G$, 
with incident edge-set $X_0 := X$ as above. Write $W_1=F(W_0)$, 
the set of vertices of the 
triangles formed by the Fisher transformation at vertices in $W_0$,
and $X_1$ for the set of edges of $G_1$ incident to vertices in $W_1$.
It may be seen that $W_1$ is a fundamental domain of $G_1$.

\begin{theorem}\label{thm:main2'}
Let $G_0$ be an infinite, quasi-transitive, connected, cubic graph.
Assume that $|\g(G_0)|<\oo$ and that $\eta(G_1)$ exists.
\begin{letlist}
\item The exponents $\g$, $\eta$ of $G_0$ and $G_1$ are equal.
\item  Let $\sigma_{n,k}$ be
the number of $n$-step SAWs on $G_k$ from mid-edges in $X_k$.
Assume the $\s_{n,k}$ satisfy \eqref{def:slow} for constants
$C_{i,k}$ and a common slowly varying function $L$. Then
the exponents $\nu$ of $G_0$ and $G_1$ are equal.
\end{letlist}
\end{theorem}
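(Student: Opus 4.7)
The argument proceeds via a canonical correspondence between self-avoiding walks on $G_0$ and on $G_1$. Given $\pi_1\in\Sigma_1$, collapse each gadget $\phi(v)$ of $G_1$ to its original vertex $v$, producing a ``skeleton'' walk $\pi_0$ on $G_0$. Since each outer vertex can be visited at most once by $\pi_1$, each gadget is used in at most two consecutive in-and-out excursions, and a second excursion (if present) is necessarily terminal: once two of the three ports have been consumed by a full crossing, any re-entry through the third port has no exit available (all other exits require passing back through an already-visited outer vertex), so the walk must end inside the gadget. Hence $\pi_0$ is either a SAW on $G_0$ or a SAW with a single terminal bounce-back. By Definition~\ref{df31}(3), the weighted enumeration of crossings between any ordered pair of ports equals the same $g(x)$ of~\eqref{dgx}, so grouping $\pi_1$ by skeleton produces
\[
\sum_{\pi_1\in\Sigma_1} x^{|\pi_1|}
\;=\;\bigl(1+E(x)\bigr)\sum_{\pi_0\in\Sigma_0} g(x)^{|\pi_0|},
\]
with $E(x)$ a bounded analytic correction accounting for start/end offsets and one bounce-back of length at most the number $M$ of vertices in the gadget. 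An identical grouping applies to each of the series $Y(x,y)$, $Z_\alpha(x)$, and the sum defining $V(z)$.

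\textbf{Invariance of $\gamma$ and $\eta$ (part (a)).}
The bijection of skeletons satisfies $|\pi_0|\le|\pi_1|\le M|\pi_0|+M$, so $|\pi_1|^{-y}\asymp|\pi_0|^{-y}$ uniformly for $y$ in any bounded set. Applying the decomposition above to $Y(x,y)$, setting $x_1:=\mu_1^{-1}$ and $x_0:=\mu_0^{-1}$, and substituting $g(x_1)=x_0$ via Theorem~\ref{thm:main2}, I obtain $Y_1(x_1,y)\asymp Y_0(x_0,y)$ up to bounded multiplicative constants. The common divergence threshold in $y$ yields $\gamma(G_0)=\gamma(G_1)$. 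The same decomposition applied to $Z_\alpha$ gives $Z_\alpha^{(1)}(x_1)\asymp Z_\alpha^{(0)}(x_0)$; since the gadget has bounded diameter, the distances $d_{G_1}(W_1,\alpha W_1)$ and $d_{G_0}(W_0,\alpha W_0)$ differ by a bounded additive constant as $d_{G_0}(W_0,\alpha W_0)\to\infty$. Hence the polynomial test in~\eqref{dd1} selects the same threshold, so $\eta(G_0)=\eta(G_1)$.

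\textbf{Invariance of $\nu$ (part (b)).}
For $\pi_1$ with skeleton $\pi_0$, one has the analogous distortion bound $\|\pi_0\|\le\|\pi_1\|\le M\|\pi_0\|+M$ on end-to-end distances (using again bounded gadget diameter). Under the common-$L$ hypothesis~\eqref{def:slow}, the sequences $\sigma_{n,0}$ and $\sigma_{n,1}$ share the same slowly varying factor $L(n)$ and the same power $n^{\gamma-1}$. Partitioning $\Sigma_n$ for $G_1$ by skeleton length and extracting coefficients (using the slowly varying factor of $L$ under the substitution $g(x_1)=x_0$) yields a matched-parametrization comparison of the form
\[
\langle\|\pi\|^2\rangle_n^{(1)}\;\asymp\;\langle\|\pi\|^2\rangle_{\lfloor n/c\rfloor}^{(0)}
\]
for a suitable constant $c>0$. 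Substituting into the series $V(z)$ of~\eqref{def0}, the rescaling $n\mapsto n/c$ only alters the total by a constant factor, so the convergence threshold in $z$ is unchanged: $\nu(G_0)=\nu(G_1)$.

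\textbf{Main obstacle.}
The most delicate point is part~(b): $\gamma$ and $\eta$ are defined via generating functions, where the bounded-distortion bijection directly transfers thresholds, but $\nu$ involves the average $\langle\|\pi\|^2\rangle_n$ at a fixed length, which requires a Tauberian/coefficient-extraction step on top of the correspondence. The common slowly varying hypothesis~\eqref{def:slow} is precisely what prevents the linear length rescaling $|\pi_1|\approx c|\pi_0|$ from shifting the exponent $n^{2\nu}$, by matching the polynomial corrections on both sides. A secondary but routine complication is the combinatorial bookkeeping for the bounce-back and initial/terminal edge effects, which generalize those of the Fisher-triangle analysis in~\cite{GrimmettLi2013Fisher} but remain uniformly bounded in the length of the walk.
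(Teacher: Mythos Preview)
Your treatment of part~(a) follows essentially the same route as the paper, with two minor corrections. First, the distances $d_{G_1}(W_1,\alpha W_1)$ and $d_{G_0}(W_0,\alpha W_0)$ differ by a bounded \emph{multiplicative} factor, not an additive one: a geodesic of length $d$ in $G_0$ lifts to a path of length $\Theta(d)$ in $G_1$, since each vertex is replaced by a gadget of bounded but generally ${}>1$ diameter. This is harmless for the $\eta$ argument. Second, your bounce-back bookkeeping is messier than you indicate (the second excursion into a gadget need not be consecutive with the first in the skeleton, and the starting gadget admits further cases when the walk begins at an intra-gadget mid-edge). The paper sidesteps all of this by restricting to the subset $\Sigma_1^*\subset\Sigma_1$ of SAWs that begin and end at mid-edges of $E_0$: on $\Sigma_1^*$ the contraction to $\Sigma_0$ is a clean surjection with fibre weight exactly $g(x)^{|\pi_0|}$, and the remainder $\Sigma_1\setminus\Sigma_1^*$ is controlled by trimming at most $2N$ endpoint steps.

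Part~(b), however, has a genuine gap. Your argument hinges on the pointwise estimate $\langle\|\pi\|^2\rangle_n^{(1)}\asymp\langle\|\pi\|^2\rangle_{\lfloor n/c\rfloor}^{(0)}$ for a single constant $c$, but nothing in the hypotheses forces the skeleton length of a uniformly random $n$-step SAW on $G_1$ to concentrate near $n/c$: it ranges over all of $[n/N,\,n/2]$, and without an a~priori regularity assumption on $m\mapsto\langle\|\pi\|^2\rangle_m^{(0)}$ (which is not among the hypotheses) there is no way to reduce the resulting weighted average over $m$ to a single representative. The paper never attempts a fixed-$n$ comparison. Instead it compares the \emph{series} $V_0(z)$ and $V_1(z)$ directly: writing $V_1^*(z)$ as a double sum over skeletons $\omega\in\Sigma_{n,0}$ and their expansions $\pi$ of length $m\in[2n,Nn]$, one uses $\|\pi\|_1\asymp\|\omega\|_0$, then the slowly varying hypothesis~\eqref{def:slow} to write $\sigma_{m,1}^{-1}\asymp\mu_1^{-m}/(L(m)m^{\gamma-1})$ and to absorb $L(m)$ and $m^{-(2z+\gamma)}$ into $L(n)$ and $n^{-(2z+\gamma)}$ uniformly over the window; after this the inner sum collapses via the exact identity $\sum_m b_{n,m}\mu_1^{-m}=g(\mu_1^{-1})^n=\mu_0^{-n}$. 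This yields $V_1^*(z)\asymp V_0(z)$ without ever isolating a single scale $c$, and the common $L$ is what makes the two uses of~\eqref{def:slow} (for $k=0$ and $k=1$) match.
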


Our final result concerns the effect of the local transformation 
when applied to just one of the vertex-sets of a bipartite graph.
Let $G=(V,E)$ be bipartite with vertex-sets $V_1$, $V_2$ coloured white and black, 
\resp. We think of $G$ as a graph together with a colouring $\chi$, and 
the \emph{coloured-automorphism group} $\sAc=\sAc(G)$ of the pair $(G,\chi)$ is the set
of maps $\phi:V \to V$ which preserve both graph structure and colouring.
The coloured graph is \emph{quasi-transitive} if there exists a finite 
subset $W \subseteq V$ such
that: for all $v\in V$, there exists $\a\in\sAc$ such that $\a v\in W$ and $\chi(v) = \chi(\a v)$. 
As before, such a set $W$ is called a \emph{fundamental domain}.

\begin{theorem}\label{thm:main3}
\begin{enumerate}
\item Let $G$ be an infinite,  connected,
bipartite graph with vertex-sets coloured black and white, and suppose that
the coloured graph $G$ is quasi-transitive, and every black vertex has degree $3$. 
Let $\wt G$ be obtained by applying the local transformation at each black vertex. 
\begin{letlist}
\item The connective constants $\mu$ and $\wt\mu$ of $G$ and $\wt G$, \resp, satisfy 
$\mu^{-2} = h(\wt\mu^{-1})$ where
$h(x) =xg(x)$.
\item 
Under the corresponding assumptions of Theorem \ref{thm:main2'}, the exponents $\g$, $\eta$, $\nu$ are the same for $G$ 
as for $\wt G$.
\end{letlist}
\item Let $G$ be an infinite,  connected,
bipartite graph with vertex-sets coloured black and white, and suppose that
the coloured graph $G$ is quasi-transitive, and every vertex has degree $3$. 
Let $\wt G$ be obtained by applying the local transformation $\phi_1$ at each black vertex; and local transformation $\phi_2$ at each white vertex
\begin{letlist}
\item The connective constants $\mu$ and $\wt\mu$ of $G$ and $\wt G$, \resp, satisfy 
$\mu^{-2} = h(\wt\mu^{-1})$ where
$h(x) =g_{\phi_1}(x)g_{\phi_2}(x)$.
\item 
Under the corresponding assumptions of Theorem \ref{thm:main2'}, the exponents $\g$, $\eta$, $\nu$ are the same for $G$ 
as for $\wt G$.
\end{letlist}
\end{enumerate}
\end{theorem}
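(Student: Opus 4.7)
The strategy parallels Theorem~\ref{thm:main2} combined with the exponent analysis of Theorem~\ref{thm:main2'}, adapted to the bipartite setting. The new ingredient is that on a bipartite graph a SAW of length $n$ visits $\lceil n/2\rceil$ vertices of one colour and $\lfloor n/2\rfloor$ of the other, which is the source of the square in $\mu^{-2}=h(\widetilde\mu^{-1})$.

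\textbf{Step 1: collapse--lift bijection.} Let $X$ be the set of edges of $G$ incident to a fundamental domain $W$; these ``external'' edges survive in $\widetilde G$, with one endpoint possibly moved to an outer vertex of a gadget. For a SAW $\tilde\pi$ on $\widetilde G$ starting and ending at mid-edges of external edges, I would define the collapse $\tilde\pi\mapsto\pi$ that identifies each visited gadget with the vertex of $G$ it replaces. A key \emph{one-visit lemma} is that each gadget is entered at most once by $\tilde\pi$: a single visit consumes two of the three outer vertices (entry and exit), and any second entry would use the remaining outer vertex with no outer vertex available for exit---contradicting the requirement that $\tilde\pi$ terminate at an external mid-edge. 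The fibres of the collapse are then in bijection with independent choices of port-to-port traversals at each activated black vertex; by Definition~\ref{df31}(3) and~\eqref{dgx}, the fibre weight of $\pi\in\Sigma$ equals $x^{n_w(\pi)}g(x)^{n_b(\pi)}$ in Part~(1) and $g_{\phi_2}(x)^{n_w(\pi)}g_{\phi_1}(x)^{n_b(\pi)}$ in Part~(2), where $n_w(\pi),n_b(\pi)$ count white/black vertices visited.

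\textbf{Step 2: generating-function identity and exponents.} Summing over $\pi\in\Sigma$ and grouping by parity of $|\pi|$, the bipartite alternation yields
\[
\widetilde Z(x)\;=\;\sum_{k\ge 0}\bigl(\sigma_{2k}+r(x)\,\sigma_{2k+1}\bigr)\,s(x)^{k},
\]
with $s(x)=xg(x)$, $r(x)\in\{x,g(x)\}$ in Part~(1), and $s(x)=g_{\phi_1}(x)g_{\phi_2}(x)$, $r(x)\in\{g_{\phi_1}(x),g_{\phi_2}(x)\}$ in Part~(2). Since $r(x)$ is bounded on compacta and each $h$ is strictly increasing and continuous on $[0,\widetilde\mu^{-1}]$ with $h(0)=0$, the radius of convergence of $\widetilde Z(x)$ is the unique positive root of $s(x)=\mu^{-2}$, giving $\mu^{-2}=h(\widetilde\mu^{-1})$. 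For part~(b), I would run the comparisons of Theorem~\ref{thm:main2'}: the length identity $|\tilde\pi|=\Theta(|\pi|)$ and endpoint shift $d_{\widetilde G}(W,\alpha W)=d_G(W,\alpha W)+O(1)$ give invariance of $\gamma$ and $\eta$; for $\nu$ the common slowly varying function cancels in the substitution under~\eqref{def:slow}, combined with $\|\tilde\pi\|-\|\pi\|=O(1)$.

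\textbf{Main obstacle.} The crux is the one-visit lemma of Step~1. It is transparent for the Fisher triangle, but for a general three-port gadget with rich internal structure one must use both the port-symmetry hypothesis of Definition~\ref{df31}(3) and the external-endpoint requirement to rule out partial second entries that stall inside the gadget. A secondary subtlety is the bipartite parity bookkeeping encoded in $r(x)$; this correction is invisible in the connective-constant limit but must be tracked carefully in the exponent comparisons.
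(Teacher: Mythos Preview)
Your plan is essentially the same as the paper's: restrict to SAWs on $\widetilde G$ that start and end at mid-edges of the original edge-set $E$, collapse gadgets to vertices, and read off the substitution $C(x)=Z(g_1(x),g_2(x))$; the paper packages your parity split via a bivariate partition function $Z(x,y)=\sum_\pi x^{|\pi_{\mathrm{black}}|}y^{|\pi_{\mathrm{white}}|}$ and the observation that $Z_{\mathrm{even}}$ depends only on $xy$, which cleanly absorbs the odd-length correction that you encode in $r(x)$ (note your $r(x)$ is not a single value but varies with the colour of the first visited vertex, so the displayed formula needs a minor split). Two small corrections to your exponent sketch: the distance comparison between $G$ and $\widetilde G$ is multiplicative, $d_{\widetilde G}\asymp d_G$ and $\|\widetilde\pi\|\asymp\|\pi\|$, not additive $O(1)$; and your ``main obstacle'' is not one---the one-visit lemma follows immediately from the three-port structure together with the external-endpoint restriction, with no appeal to the symmetry hypothesis of Definition~\ref{df31}(3) (that hypothesis is used only to make $g(x)$ independent of the chosen port pair).
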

 
Theorem \ref{thm:main3}(a) implies an exact value of a connective constant that does
not appear to have been noted previously. Take $G=\HH$, the hexagonal
lattice with connective constant $\mu= \sqrt{2+\sqrt 2} \approx 1.84776$,
see \cite{ds}.

The proofs of Theorems \ref{thm:main2}-\ref{thm:main2'} and 
\ref{thm:main3} are found  in
Sections \ref{sec:main2} and \ref{sec:proof3}, \resp.

\section{Proof of Theorems \ref{thm:main2}-\ref{thm:main2'}}\label{sec:main2}

\begin{proof}[Proof of Theorem \ref{thm:main2}]

Let $G=(V,E)$ be an infinite, connected, quasi-transitive, cubic graph. 
The graph $G_1 = \phi(G)$ is also quasi-transitive. It suffices for part (a) to show that the connective constants $\mu_k$
of the $G_k$ satisfy 
\begin{equation}\label{10}
g(\mu_1^{-1}) = \mu_0^{-1}.
\end{equation}

Let $W$ be a minimal fundamental domain of $G$, and let 
$X$ be the subset of $E$ comprising all edges incident 
to vertices in $W$. Write $W_1=\phi(W)$, the set of vertices of the 
triangles formed by the Fisher transformation at vertices in $W$,
and $X_1$ for the set of edges of $G_1$ incident to vertices in $W_1$.
It may be seen that $W_1$ is a fundamental domain of $G_1$.

It is convenient to work with SAWs that start and end at mid-edges.
Note that the mid-edges 
of $E$ (\resp, $X$) may be viewed as  mid-edges of 
$E_1$ (\resp, $X_1$). 

Let $G_0:=G$. For $k=0,1$, the partition functions of SAWs on $G_k$ are the polynomials
\begin{equation*}
Z_k(x)=\sum_{\pi\in \Sigma_k} x^{|\pi|}, \qquad x >0,
\end{equation*}
where the sum is over the set $\Sigma_k$ of
SAWs starting at mid-edges of $X_k$. Similarly, we set
\begin{equation*}
Z_1^*(x) = \sum_{\pi \in \Sigma_1^*} x^{|\pi|},
\end{equation*}
where the sum is over the set $\Sigma_1^*$ of SAWs on 
$G_1$ starting at mid-edges of $X$ and ending at mid-edges of $E$.
For $k=0,1$,
\begin{equation}\label{225}
Z_k(x) 
\begin{cases} <\oo &\text{if } x < \mu_k^{-1},\\
=\oo &\text{if } x > \mu_k^{-1}.
\end{cases}
\end{equation}

The following basic argument
formalizes a method known already in the special case of the 
hexagonal lattice, see for example \cite{g,jg}.
Since $\Sigma_1^* \subseteq \Sigma_1$, we have 
\begin{equation}\label{221}
Z_1^*(x) \le Z_1(x).
\end{equation}
Let  $N(\pi)$ be the number of endpoints of a SAW 
$\pi\in \Sigma_1$ 
that are mid-edges of $E$. 
The set $\Sigma_1$ may be partitioned into three sets.
\begin{letlist}
\item If $N(\pi)=2$, then $\pi$ contributes
to $Z_1^*$.
\item $\pi$ may be a  walk within a single gadget.
\item If (b) does not hold and $N(\pi)\le 1$, any endpoint not in $E$ may
be moved by finitely many steps (indeed, at most $|\phi(v)|$, where $|\phi(v)|$ is the number of vertices in the gadget $\phi(v)$) along $\pi$ to obtain a shorter SAW in $\Sigma_1^*$.
\end{letlist}
By considering the numbers of SAWs in each subcase of (c), we find that
\begin{equation}\label{222}
Z_1(x) \leq  
\left[1+\left(\sum_{i=1}^{|\phi(v)|}(Dx)^i\right)\right]^2 Z_1^*(x) +|E(\phi(v))||W|\left(\sum_{i=1}^{|\phi(v)|}(Dx)^i
\right).
\end{equation}
where the last term corresponds to case (b) and $|E(\phi(v))|$ is the number of edges in the gadget $\phi(v)$, and $D$ is the maximal vertex degree in $\phi(v)$.
By \eqref{221}--\eqref{222}, 
$$
Z_1(x)<\oo \quad\Leftrightarrow\quad Z_1^*(x)<\oo,
$$
so that, by \eqref{225}, 
\begin{equation}\label{223}
Z_1^*(x) \begin{cases} <\oo &\text{if } x < \mu_1^{-1},\\
=\oo &\text{if } x > \mu_1^{-1}.
\end{cases}
\end{equation}

With a SAW in $\Sigma_1^*$ we associate a SAW in $\Sigma_0$ 
by shrinking each gadget $\phi(v)$ to a vertex. 

Let $r$ be the number of SAW's in a gadget $\phi(v)$ joining two outer vertices; by Definition \ref{df31}(c), $r$ is independent of the two specific outer vertices chosen.
Each $n$-step SAW  
in $\Sigma_0$ arises thus from $r^n$ SAWs in $\Sigma_1^*$. 
Therefore,
\begin{equation}\label{r2}
Z_0(g(x))=Z_{1}^*(x),
\end{equation}
and \eqref{10} follows by \eqref{225} and \eqref{223}.
\end{proof}

\begin{proof}[Proof of Theorem~\ref{thm:main2'}]
Throughout, let $G_0:=G$ and $G_1:=\phi(G)$, and let $W_k,X_k,\Sigma_k$ be as in
Section~3.  Let $\Sigma_1^\ast$ be the set of SAWs on $G_1$ that start at mid-edges of $X_0$
and end at mid-edges of $E_0$ (viewed as mid-edges of $E_1$).  For $n\ge 0$, write
$\Sigma_{n,k}$ (resp.\ $\Sigma_{n,1}^\ast$) for the subset of $\Sigma_k$ (resp.\ $\Sigma_1^\ast$)
containing $n$-step SAWs, and let $\sigma_{n,k}:=|\Sigma_{n,k}|$ and $\sigma_{n,1}^\ast:=|\Sigma_{n,1}^\ast|$.

Let $N:=|V(\phi(v))|$ be the number of vertices of a gadget (independent of $v$ by
Definition~3.1(2)).  Let $\Delta:=\mathrm{diam}(\phi(v))$ and $D:=\max_{u\in V(\phi(v))}\deg_{\phi(v)}(u)$
(both independent of $v$).  Note that any SAW segment inside a gadget connecting two
ports has length between $2$ and $N$ (in the mid-edge convention of Section~2).

\medskip\noindent
\textbf{(a) Equality of $\gamma$ and $\eta$.}
For $k=0,1$ define
\[
Y_k(x,y)\;=\;\sum_{\pi\in\Sigma_k}\frac{x^{|\pi|}}{|\pi|^{\,y}},
\qquad
Y_1^\ast(x,y)\;=\;\sum_{\pi\in\Sigma_1^\ast}\frac{x^{|\pi|}}{|\pi|^{\,y}},
\qquad x>0,\ y\in\mathbb{R},
\]
with the convention that the denominator is $1$ when $|\pi|=0$.

\smallskip\noindent
\emph{Step 1: $Y_1^\ast(x,y)<\infty \iff Y_1(x,y)<\infty$ for fixed $(x,y)$.}
Since $\Sigma_1^\ast\subseteq \Sigma_1$, we have $Y_1^\ast(x,y)\le Y_1(x,y)$.

Conversely, consider $\pi\in\Sigma_1\setminus\Sigma_1^\ast$.  Either:
(i) $\pi$ is entirely contained in a single gadget (there are only finitely many such walks,
of bounded length $\le N$), or
(ii) $\pi$ visited at least one mid-edge in $E_0$ and at least one endpoint of $\pi$ is a mid-edge not in $E_0$.
In case (ii), move each endpoint that is not in $E_0$ along $\pi$ to the first mid-edge of $E_0$
encountered when traversing $\pi$ from that endpoint.  This removes at most $N$ visited vertices
per endpoint, hence produces a (strictly) shorter SAW $\pi^\dagger\in \Sigma_1^\ast$ with
$|\pi|-2N\le |\pi^\dagger|\le |\pi|$.

Moreover, for a fixed $\pi^\dagger$, the number of possible $\pi$ that reduce to it by this procedure
is bounded by a constant depending only on $(N,D)$ (each endpoint extension has at most $\sum_{j=0}^{N}(Dx)^j$
choices at weight $x$, and there are two endpoints).  Since $|\pi|$ and $|\pi^\dagger|$ differ by at most $2N$,
we also have for fixed $y\in\mathbb{R}$ a constant $C_y<\infty$ such that
$|\pi|^{-y}\le C_y\,|\pi^\dagger|^{-y}$.
It follows that for each fixed $(x,y)$,
\[
Y_1(x,y)\;\le\;C(x,y)\,Y_1^\ast(x,y)\;+\;C'(x,y),
\]
where $C'(x,y)<\infty$ accounts for the finitely many SAWs contained in a single gadget.
Therefore,
\begin{equation}\label{eq:Ystar-equiv}
Y_1^\ast(x,y)<\infty \iff Y_1(x,y)<\infty .
\end{equation}

\smallskip\noindent
\emph{Step 2: compare $Y_1^\ast(x,y)$ and $Y_0(g(x),y)$.}
Fix an $n$-step SAW $\omega\in\Sigma_{n,0}$.  Contracting each gadget of $G_1$ to a vertex
maps every $\pi\in\Sigma_1^\ast$ to a unique $\omega\in\Sigma_0$; conversely, expanding each visit
to a vertex of $\omega$ by a SAW segment inside the corresponding gadget yields SAWs in $\Sigma_1^\ast$.
Let $a_m:=\sigma^{\phi(v)}_{m}(e_1,e_2)$ be the number of $m$-step gadget SAWs (in the mid-edge convention)
connecting two ports, so that
\[
g(x)=\sum_{m\ge 0} a_m x^{m},
\qquad a_0=a_1=0,\qquad a_m=0 \text{ for } m>N.
\]
Let $b_{n,m}$ be the coefficient of $x^m$ in $g(x)^n$, i.e.\ $g(x)^n=\sum_m b_{n,m}x^m$.
Then $b_{n,m}\neq 0$ only when $2n\le m\le Nn$, and the total contribution to $Y_1^\ast(x,y)$
from all $\pi\in\Sigma_1^\ast$ that contract to this fixed $\omega$ equals
\[
T_n(x,y)\;:=\;\sum_{m=2n}^{Nn} b_{n,m}\,\frac{x^{m}}{m^{\,y}}.
\]
Since $2n\le m\le Nn$, we have
\[
(2n)^{-y}\wedge (Nn)^{-y}\;\le\; m^{-y}\;\le\;(2n)^{-y}\vee (Nn)^{-y}.
\]
Hence, with
\[
c_y := 2^{-y}\wedge N^{-y},
\qquad
c_y' := 2^{-y}\vee N^{-y},
\]
we obtain the bounds
\[
c_y\,\frac{g(x)^n}{n^{\,y}}
\;\le\;
T_n(x,y)
\;\le\;
c_y'\,\frac{g(x)^n}{n^{\,y}}.
\]
Summing $T_n(x,y)$ over all $\omega\in\Sigma_{n,0}$ and then over $n\ge 1$ gives
\begin{equation}\label{eq:Ystar-Y0-compare}
c_y\,\widetilde Y_0(g(x),y)\;\le\;\widetilde Y_1^\ast(x,y)\;\le\;c_y'\,\widetilde Y_0(g(x),y),
\end{equation}
where $\widetilde Y$ denotes the series with the (harmless) $n=0$ term removed.
In particular,
\begin{equation}\label{eq:Ystar-finite-iff}
Y_1^\ast(x,y)<\infty \iff Y_0(g(x),y)<\infty .
\end{equation}

By Theorem~3.3, $\mu_0^{-1}=g(\mu_1^{-1})$.  Combining \eqref{eq:Ystar-equiv} and \eqref{eq:Ystar-finite-iff}
at $x=\mu_1^{-1}$ yields
\[
Y_1(\mu_1^{-1},y)<\infty \iff Y_0(\mu_0^{-1},y)<\infty,
\]
and therefore $\gamma(G_1)=\gamma(G_0)$.

\smallskip\noindent
\emph{Step 3: equality of $\eta$.}
For $\alpha\in A(G_0)$, and $k\in\{0,1\}$ define 
\begin{align*}
Z_{\alpha,k}(x)=\sum_{\pi\in \Sigma_k(\alpha)}x^{|\pi|}
\end{align*}
where $\Sigma_k(\alpha
)$ is the subset of $\Sigma_k$ containing SAWs ending at mid-edges in $\alpha X_k$.
Define $Z_{\alpha,1}^\ast(x)$ analogously but with SAWs on $G_1$ from mid-edges of $X_0$ to mid-edges of
$\alpha X_0$ (both viewed in $G_1$).  As in the proof of Theorem~3.3, endpoint-adjustment inside gadgets gives
a two-sided bound
\begin{equation}\label{eq:Zalpha-compare}
Z_{\alpha,1}^\ast(x)\;\le\; Z_{\alpha,1}(x)\;\le\;C(x)\,Z_{\alpha,1}^\ast(x),
\end{equation}
for a finite constant $C(x)$ depending only on the gadget and $x$.
Moreover, shrinking gadgets yields the exact identity
\begin{equation}\label{eq:Zalpha-subst}
Z_{\alpha,1}^\ast(x)\;=\;Z_{\alpha,0}(g(x)).
\end{equation}
Evaluating at $x=\mu_1^{-1}$ and using $\mu_0^{-1}=g(\mu_1^{-1})$ gives
$Z_{\alpha,1}^\ast(\mu_1^{-1})=Z_{\alpha,0}(\mu_0^{-1})$.

Finally, graph distances in $G_0$ and $G_1$ are comparable up to multiplicative constants:
there exists $\kappa\in(0,\infty)$ (depending only on the gadget) such that for all $\alpha$,
\[
d_{G_0}(W_0,\alpha W_0)\;\le\; d_{G_1}(W_1,\alpha W_1)\;\le\;\kappa\, d_{G_0}(W_0,\alpha W_0).
\]
Using \eqref{eq:Zalpha-compare}--\eqref{eq:Zalpha-subst} and the above distance comparison in the definition
\eqref{dd1} of $\eta$, we obtain $\eta(G_1)=\eta(G_0)$ (under the assumed existence of $\eta(G_1)$).
This completes the proof of part~(a).

\medskip\noindent
\textbf{(b) Equality of $\nu$ under \eqref{def:slow}.}
For $k=0,1$, let
\[
V_k(z)\;=\;\sum_{n=1}^\infty \frac{1}{n^{2z+1}}\,\Big\langle \|\pi\|_k^{\,2}\Big\rangle_{n,k},
\]
as in \eqref{def0}, and define
\begin{align}
V_1^\ast(z)\;:=\;\sum_{n=1}^\infty \frac{1}{n^{2z+1}}\,
\frac{1}{\sigma_{n,1}}\sum_{\pi\in\Sigma_{n,1}^\ast}\|\pi\|_1^{\,2}.\label{dv1s}
\end{align}
(Equivalently, $V_1^\ast(z)=\sum_{n\ge 1}n^{-(2z+1)}(\sigma_{n,1}^\ast/\sigma_{n,1})\langle\|\pi\|_1^2\rangle_{n,1}^\ast$.)

\smallskip\noindent
\emph{Step 4: $V_1(z)<\infty \iff V_1^\ast(z)<\infty$ for each finite $z$.}
Clearly $V_1^\ast(z)\le V_1(z)$.
For the reverse implication,

Recall that
\[
V_1(z)=\sum_{n\ge1}\frac{1}{n^{2z+1}}\Big\langle \|\pi\|_1^2\Big\rangle_{n,1}
=\sum_{n\ge1}\frac{1}{n^{2z+1}\sigma_{n,1}}\sum_{\pi\in\Sigma_{n,1}}\|\pi\|_1^2,
\]
and, by \eqref{dv1s},
\[
V_1^\ast(z)=\sum_{n\ge1}\frac{1}{n^{2z+1}}\frac{\sigma_{n,1}^\ast}{\sigma_{n,1}}
\Big\langle \|\pi\|_1^2\Big\rangle_{n,1}^\ast
=\sum_{n\ge1}\frac{1}{n^{2z+1}\sigma_{n,1}}\sum_{\pi\in\Sigma_{n,1}^\ast}\|\pi\|_1^2.
\]
Hence
\begin{equation}\label{eq:V1-decomp}
V_1(z)=V_1^\ast(z)+R(z),
\qquad
R(z):=\sum_{n\ge1}\frac{1}{n^{2z+1}\sigma_{n,1}}
\sum_{\pi\in\Sigma_{n,1}\setminus\Sigma_{n,1}^\ast}\|\pi\|_1^2.
\end{equation}

\smallskip\noindent
\textbf{(i) Walks contained in a single gadget give a finite additive constant.}
Let $\mathcal{G}$ be the set of SAWs in $\Sigma_{1}$ that are entirely contained in a single
gadget. Since the walk starts at a mid-edge in $X_1$, only finitely many gadgets intersect
the fundamental domain $W_1$, and each gadget is finite. Therefore $\mathcal{G}$ is finite,
and its total contribution
\[
C_z' := \sum_{\pi\in\mathcal{G}}
\frac{\|\pi\|_1^2}{|\pi|^{2z+1}\sigma_{|\pi|,1}}
\]
is finite for each fixed finite $z$.

\smallskip\noindent
\textbf{(ii) Trimming map to $\Sigma_{1}^\ast$ with bounded multiplicity.}
 For $\pi\in\Sigma_{n,1}\setminus\Sigma_{n,1}^\ast$ that is not
contained in a single gadget, at least one endpoint of $\pi$ is not a mid-edge of $E_0$.
Move each endpoint along $\pi$ to the first mid-edge of $E_0$ encountered, deleting the
initial segment at that endpoint. This produces a shorter SAW $\tau(\pi)\in\Sigma_{m,1}^\ast$
with
\[
m=|\tau(\pi)|\in\{n-j:\ 0\le j\le 2N\}.
\]
Moreover, 
\[
K:=\Bigl(1+\sum_{i=1}^{N}D^{i}\Bigr)^2<\infty
\]
such that every $\pi^\ast\in\Sigma_{m,1}^\ast$ has at most $K$ preimages (each endpoint can be
extended by at most $N$ steps, each step has at most $D$ choices; there are two endpoints) under $\tau$.

Also, trimming changes each endpoint by at most $N$ vertices within a gadget, hence changes
the endpoint distance by at most an additive constant. In particular,
\[
\|\pi\|_1 \le \|\tau(\pi)\|_1 + 2M.
\]
Since $|\tau(\pi)|\ge 1$ implies $\|\tau(\pi)\|_1\ge 1$, we have
\begin{equation}\label{eq:dist-bound}
\|\pi\|_1^2 \le (1+2M)^2\,\|\tau(\pi)\|_1^2 .
\end{equation}

\smallskip\noindent
\textbf{(iii) Comparing the normalizing factors using \eqref{def:slow}.}
Fix $j\in\{0,\dots,2N\}$ and finite $z$. For $m\ge1$,
\begin{equation}\label{eq:weight-compare}
\frac{1}{(m+j)^{2z+1}\sigma_{m+j,1}}
=
\Bigl(\frac{m}{m+j}\Bigr)^{2z+1}\frac{\sigma_{m,1}}{\sigma_{m+j,1}}
\cdot \frac{1}{m^{2z+1}\sigma_{m,1}}.
\end{equation}
By \eqref{def:slow} (i.e.\ (3.7)) for $k=1$,
\[
\sigma_{n,1}\asymp L(n)\,n^{\gamma-1}\mu_1^{\,n},
\]
so for fixed $j$,
\[
\frac{\sigma_{m,1}}{\sigma_{m+j,1}}
\le
\frac{C_{2,1}}{C_{1,1}}\,
\frac{L(m)}{L(m+j)}\,
\Bigl(\frac{m}{m+j}\Bigr)^{\gamma-1}\,
\mu_1^{-j}.
\]
Since $L$ is slowly varying, $L(m)/L(m+j)$ is bounded in $m$ for each fixed $j$,
and $\frac{m}{m+j}\in[1/(1+j),1]$ so the power term is bounded as well. Therefore the product
\[
A_{z,j}:=\sup_{m\ge1}\Bigl(\frac{m}{m+j}\Bigr)^{2z+1}\frac{\sigma_{m,1}}{\sigma_{m+j,1}}<\infty,
\]
and \eqref{eq:weight-compare} yields
\begin{equation}\label{eq:factor-bound}
\frac{1}{(m+j)^{2z+1}\sigma_{m+j,1}}
\le A_{z,j}\cdot \frac{1}{m^{2z+1}\sigma_{m,1}}.
\end{equation}

\smallskip\noindent
\textbf{(iv) Bounding $R(z)$ by $V_1^\ast(z)$.}
Using \eqref{eq:dist-bound}, bounded multiplicity $K$, and \eqref{eq:factor-bound}, we obtain
\begin{align*}
R(z)
&\le C_z' + \sum_{j=0}^{2N}\sum_{m\ge1}
\frac{1}{(m+j)^{2z+1}\sigma_{m+j,1}}
\sum_{\substack{\pi\in\Sigma_{m+j,1}\setminus\Sigma_{m+j,1}^\ast\\ |\pi|-|\tau(\pi)|=j}}
\|\pi\|_1^2 \\
&\le C_z' + \sum_{j=0}^{2N}\sum_{m\ge1}
\frac{1}{(m+j)^{2z+1}\sigma_{m+j,1}}
\sum_{\pi^\ast\in\Sigma_{m,1}^\ast} K\,(1+2N)^2 \|\pi^\ast\|_1^2 \\
&\le C_z' + \sum_{j=0}^{2N} K\,(1+2N)^2 A_{z,j}
\sum_{m\ge1}\frac{1}{m^{2z+1}\sigma_{m,1}}\sum_{\pi^\ast\in\Sigma_{m,1}^\ast}\|\pi^\ast\|_1^2 \\
&= C_z' + C_z\, V_1^\ast(z),
\end{align*}
where $C_z:=\sum_{j=0}^{2N} K(1+2N)^2A_{z,j}<\infty$.
Combining this with \eqref{eq:V1-decomp} gives the explicit bound
\begin{equation}\label{eq:V1-upper-explicit}
V_1(z)\le (1+C_z)\,V_1^\ast(z)+C_z'.
\end{equation}
Since also $V_1^\ast(z)\le V_1(z)$, we conclude that for each fixed finite $z$,
\[
V_1(z)<\infty \iff V_1^\ast(z)<\infty,
\]
which is \eqref{eq:V1-V1star}.

Therefore \begin{equation}\label{eq:V1-V1star} V_1(z)\;<\infty \iff V_1^\ast(z)\;<\infty .
\end{equation}

\smallskip\noindent
\emph{Step 5: $V_1^\ast(z)<\infty \iff V_0(z)<\infty$ for each finite $z$.}
Fix $n\ge 1$ and $\omega\in\Sigma_{n,0}$.  Let $\mathcal{E}(\omega)$ be the family of SAWs
$\pi\in\Sigma_1^\ast$ that contract to $\omega$.  As above, for each $m$ the number of $\pi\in\mathcal{E}(\omega)$
with $|\pi|=m$ equals $b_{n,m}$ (the coefficient of $x^m$ in $g(x)^n$), and $b_{n,m}=0$ unless $2n\le m\le Nn$.

Since endpoints of $\pi$ and $\omega$ correspond to the same two original mid-edges, the graph distances satisfy
$\|\omega\|_0 \le \|\pi\|_1 \le \kappa \|\omega\|_0$ for some $\kappa<\infty$ depending only on the gadget.
Hence, for fixed $\omega$,
\[
\sum_{\pi\in\mathcal{E}(\omega)} \frac{\|\pi\|_1^{\,2}}{|\pi|^{2z+1}\,\sigma_{|\pi|,1}}
\asymp
\|\omega\|_0^{\,2}\sum_{m=2n}^{Nn} b_{n,m}\,\frac{1}{m^{2z+1}\,\sigma_{m,1}},
\]
where $\asymp$ hides multiplicative constants depending only on $(z,\kappa)$.

Now apply \eqref{def:slow} (for $k=1$): for all $m\ge 1$,
\[
\frac{1}{\sigma_{m,1}}
\asymp
\frac{\mu_1^{-m}}{L(m)\,m^{\gamma-1}}.
\]
Therefore,
\[
\sum_{m=2n}^{Nn} b_{n,m}\,\frac{1}{m^{2z+1}\,\sigma_{m,1}}
\asymp
\sum_{m=2n}^{Nn} b_{n,m}\,\frac{\mu_1^{-m}}{L(m)\,m^{2z+\gamma}}.
\]
Since $L$ is slowly varying and $m\in[2n,Nn]$, the values $L(m)$ are comparable to $L(n)$ up to a constant
(depending on $N$).  Also $m^{-(2z+\gamma)}$ is comparable to $n^{-(2z+\gamma)}$ up to a constant depending
on $(z,N)$.  Hence
\[
\sum_{m=2n}^{Nn} b_{n,m}\,\frac{\mu_1^{-m}}{L(m)\,m^{2z+\gamma}}
\asymp
\frac{1}{L(n)\,n^{2z+\gamma}}\sum_{m=2n}^{Nn} b_{n,m}\,\mu_1^{-m}.
\]
But $\sum_m b_{n,m}\mu_1^{-m}=g(\mu_1^{-1})^n=\mu_0^{-n}$ by Theorem~3.3.
Thus,
\[
\sum_{\pi\in\mathcal{E}(\omega)} \frac{\|\pi\|_1^{\,2}}{|\pi|^{2z+1}\,\sigma_{|\pi|,1}}
\asymp
\|\omega\|_0^{\,2}\,\frac{\mu_0^{-n}}{L(n)\,n^{2z+\gamma}}.
\]
Summing over $\omega\in\Sigma_{n,0}$ and then over $n\ge 1$ yields
\[
V_1^\ast(z)\asymp \sum_{n\ge 1}\frac{1}{\sigma_{n,0}}\frac{1}{n^{2z+1}}\sum_{\omega\in\Sigma_{n,0}}\|\omega\|_0^{\,2}
\;=\;V_0(z),
\]
where we used again \eqref{def:slow} for $k=0$ to identify
$\sigma_{n,0}^{-1}n^{-(2z+1)}\asymp \mu_0^{-n}(L(n)n^{2z+\gamma})^{-1}$.
Consequently, for finite $z$,
\begin{equation}\label{eq:V0-V1star}
V_1^\ast(z)<\infty \iff V_0(z)<\infty .
\end{equation}

Combining \eqref{eq:V1-V1star} and \eqref{eq:V0-V1star}, we conclude that
$V_1(z)<\infty \iff V_0(z)<\infty$ for all finite $z$, hence $\nu(G_1)=\nu(G_0)$.
\end{proof}

\section{Proof of Theorem \ref{thm:main3}}\label{sec:proof3}

\begin{proof}[Proof of Theorem~3.5]
We prove part~(2). Part~(1) follows by taking $\phi_2$ to be the trivial
transformation (so that $g_{\phi_2}(x)=x$ in the mid-edge convention), and by
noting that the argument below never uses that white vertices have degree $3$
except when defining $g_{\phi_2}$.

\medskip
\noindent\textbf{Setup.}
Let $G=(V,E)$ be an infinite, connected, bipartite graph with color classes
$V_{\mathrm{black}}$ and $V_{\mathrm{white}}$, and suppose $G$ is quasi-transitive as a colored graph.
Assume every vertex has degree $3$.
Let $G_{\mathrm e}$ be obtained from $G$ by applying the local transformation
$\phi_1$ at each black vertex and $\phi_2$ at each white vertex.
Let $W\subset V$ be a minimal fundamental domain for the colored automorphism group,
and let $X\subset E$ be the set of edges incident to vertices of $W$.
Let $W_{\mathrm e}$ be the set of vertices of $G_{\mathrm e}$ lying in gadgets
over vertices of $W$, and let $X_{\mathrm e}$ be the set of edges of $G_{\mathrm e}$
incident to vertices in $W_{\mathrm e}$.
As before, we work with SAWs starting/ending at mid-edges, and the mid-edges of $E$
(resp.\ $X$) are viewed as mid-edges of $E_{\mathrm e}$ (resp.\ $X_{\mathrm e}$).

For $n\ge0$, let $s_n$ be the number of $n$-step SAWs on $G_{\mathrm e}$ starting at
mid-edges in $X_{\mathrm e}$.
Let $c_n$ be the number of $n$-step SAWs on $G_{\mathrm e}$ starting at mid-edges in $X$
and ending at mid-edges in $E$ (both viewed inside $G_{\mathrm e}$).
Clearly,
\begin{equation}\label{eq:cn-le-sn}
c_n \le s_n \qquad (n\ge0).
\end{equation}

\medskip
\noindent\textbf{Step 1: $s_n$ and $c_n$ have the same exponential growth rate.}
Let
\begin{align*}
&N:=\max\{|V(\phi_1(v))|,\ |V(\phi_2(v))|\},\\
&D:=\max\{\deg(u): u\text{ lies in a gadget of }\phi_1\text{ or }\phi_2\}.
\end{align*}
Every $n$-step SAW counted by $s_n$ falls into one of two classes:
\begin{itemize}
\item it is entirely contained in a single gadget (a finite graph);
\item otherwise, it visits at least one mid-edge belonging to $E$.
\end{itemize}
In the first case $n\leq N$.
In the second case, trim each endpoint that is not a mid-edge of $E$ along the walk
to the first encountered mid-edge of $E$. This removes at most $N$ visited vertices
per endpoint, hence at most $2N$ in total, producing a shorter SAW counted by
$c_{n-i}$ for some $i\in\{0,1,\dots,2N\}$.

Conversely, given a walk counted by $c_{n-i}$, the number of ways to extend its
endpoints by $i$ additional vertices (inside gadgets) is bounded by $(i+1)D^i$.
Therefore there exists a constant $C_0<\infty$ (depending only on the gadgets and $W$)
such that, with the convention $c_m=0$ for $m<0$,
\begin{equation}\label{eq:sn-bound}
s_n \le \sum_{i=0}^{2N} (i+1)D^i\,c_{n-i}\qquad (n\ge N+1).
\end{equation}
Combining \eqref{eq:cn-le-sn} and \eqref{eq:sn-bound} yields
\[
\limsup_{n\to\infty} s_n^{1/n}=\limsup_{n\to\infty} c_n^{1/n}.
\]
Since $G_{\mathrm e}$ is quasi-transitive, the connective constant $\mu_{\mathrm e}$ exists and
equals this common growth rate. In particular, the power series
\[
C(x):=\sum_{n\ge0} c_n x^n
\]
has radius of convergence $\mu_{\mathrm e}^{-1}$.

\medskip
\noindent\textbf{Step 2: even SAWs on $G$ and the bivariate partition function.}
Let $\mathcal E$ be the set of SAWs on $G$ starting at mid-edges in $X$,
and let $\mathcal E_{\mathrm{even}}\subset\mathcal E$ be those of even length.
For $\pi\in\mathcal E$, write $|\pi_{\mathrm{black}}|$ (resp.\ $|\pi_{\mathrm{white}}|$) for the number
of black (resp.\ white) vertices visited by $\pi$; then $|\pi|=|\pi_{\mathrm{black}}|+|\pi_{\mathrm{white}}|$.

For $x,y>0$, define
\[
Z(x,y):=\sum_{\pi\in\mathcal E} x^{|\pi_{\mathrm{black}}|}y^{|\pi_{\mathrm{white}}|},
\qquad
Z_{\mathrm{even}}(x,y):=\sum_{\pi\in\mathcal E_{\mathrm{even}}} x^{|\pi_{\mathrm{black}}|}y^{|\pi_{\mathrm{white}}|}.
\]
Clearly $Z_{\mathrm{even}}(x,y)\le Z(x,y)$.
Let $\Delta:=\sup_{v\in V}\deg(v)<\infty$ (here $\Delta=3$).
Every odd-length SAW in $\mathcal E$ is obtained from an even-length SAW by appending
one extra step at one endpoint; the number of choices is bounded by $\Delta$, and the
one-step SAWs contribute a finite constant depending on $|X|$.
Consequently, for all $x,y>0$,
\begin{equation}\label{eq:Z-even-compare}
Z(x,y)-Z_{\mathrm{even}}(x,y)\ \le\ (\Delta x+\Delta y)\,\bigl(|X|+Z_{\mathrm{even}}(x,y)\bigr),
\end{equation}
and hence
\begin{equation}\label{eq:Z-finite-iff}
Z(x,y)<\infty \iff Z_{\mathrm{even}}(x,y)<\infty.
\end{equation}

Since $G$ is bipartite, any even-length SAW alternates colors and satisfies
$|\pi_{\mathrm{black}}|=|\pi_{\mathrm{white}}|=|\pi|/2$, and therefore
\[
Z_{\mathrm{even}}(x,y)=\sum_{\pi\in\mathcal E_{\mathrm{even}}} (xy)^{|\pi|/2}
=\sum_{m\ge0} \sigma_{2m}\,(xy)^m,
\]
where $\sigma_{n}$ is the number of $n$-step SAWs in $\mathcal E$.
Thus the radius of convergence of $Z_{\mathrm{even}}(x,y)$ as a function of $t:=xy$
equals $\mu^{-2}$, where $\mu=\mu(G)$ is the connective constant of $G$.
In particular,
\begin{equation}\label{eq:Z-critical}
Z(x,y)<\infty \iff xy<\mu^{-2}.
\end{equation}

\medskip
\noindent\textbf{Step 3: substitution identity and the connective-constant equation.}
Let
\[
g_1(x):=g_{\phi_1}(x),\qquad g_2(x):=g_{\phi_2}(x),
\]
where each $g_i$ is defined as in \eqref{dgx} (Definition~3.2) for the corresponding gadget.
Consider SAWs counted by $c_n$, i.e.\ SAWs in $G_{\mathrm e}$ that start at a mid-edge of $X$
and end at a mid-edge of $E$.
Contracting each gadget of $G_{\mathrm e}$ to its original vertex yields a map from such SAWs
to SAWs $\pi\in\mathcal E$ on $G$.
Conversely, given $\pi\in\mathcal E$, each time $\pi$ visits a black vertex it uses two
incident edges, and the portion of the lifted walk inside the corresponding black gadget
is any SAW connecting the two corresponding incident mid-edges; the total contribution of
all choices is exactly $g_1(x)$.
Similarly each visited white vertex contributes a factor $g_2(x)$.
Since $\pi$ is self-avoiding, these gadget-choices occur in disjoint gadgets and hence multiply.
Therefore we obtain the exact identity
\begin{equation}\label{eq:C-subst}
C(x)=\sum_{n\ge0}c_n x^n \;=\;\sum_{\pi\in\mathcal E} g_1(x)^{|\pi_{\mathrm{black}}|}\,g_2(x)^{|\pi_{\mathrm{white}}|}
\;=\; Z\bigl(g_1(x),g_2(x)\bigr).
\end{equation}
By \eqref{eq:Z-critical}, we have
\[
C(x)<\infty \iff g_1(x)\,g_2(x)<\mu^{-2}.
\]
Hence the radius of convergence of $C(x)$ is the unique positive solution $\rho$ of
\[
g_1(\rho)\,g_2(\rho)=\mu^{-2}.
\]
Since the radius of $C(x)$ is $\mu_{\mathrm e}^{-1}$ by Step~1, this yields
\[
\mu^{-2}=g_{\phi_1}(\mu_{\mathrm e}^{-1})\,g_{\phi_2}(\mu_{\mathrm e}^{-1}),
\]
which is Theorem~3.5(2)(a).

For part~(1)(a), take $\phi_2$ to be the trivial transformation; in the mid-edge convention,
traversing an untransformed vertex contributes one visited vertex, hence $g_{\phi_2}(x)=x$,
and the identity above becomes $\mu^{-2}=x\,g_{\phi_1}(x)$ with $x=\mu_{\mathrm e}^{-1}$.

\medskip
\noindent\textbf{Step 4: invariance of critical exponents (parts (1)(b) and (2)(b)).}
Under the corresponding assumptions of Theorem~3.4, one repeats the arguments of
Section~4 with the contraction/expansion map used in \eqref{eq:C-subst} (and the basic
even/odd comparison \eqref{eq:Z-finite-iff}) to compare the finiteness of the analogues of
$Y(\mu^{-1},y)$, $Z_\alpha(\mu^{-1})$, and $V(z)$ for $G$ and $G_{\mathrm e}$.
The gadget lengths are uniformly bounded above and below, so the polynomial weights in
$|\pi|$ are affected only by multiplicative constants as in the proof of Theorem~3.4.
This implies that the exponents $\gamma,\eta$ are preserved, and that $\nu$ is preserved
under the additional hypothesis \eqref{def:slow} with a common slowly varying function.
\end{proof}

\section{Examples}\label{sec:examples}

This section collects explicit gadget families and worked base-graph examples illustrating
Theorems~\ref{thm:main2} and~\ref{thm:main3}.  Many natural quasi-transitive cubic graphs
are planar (e.g.\ the hexagonal lattice), while others are not (e.g.\ the $3$-regular tree);
our substitution identities apply uniformly in all cases.

\begin{example}[Complete-graph gadgets $K_N$]\label{ex:KN}
Fix an integer $N\ge 3$. Define a local transformation $\phi^{(N)}$ as follows:
each degree-$3$ vertex $v$ is replaced by a copy of the complete graph $K_N$ with three
distinguished (outer) vertices $w_1,w_2,w_3$, and the three edges incident to $v$ are
attached to $w_1,w_2,w_3$ (one per edge).
Since $\mathrm{Aut}(K_N)\cong S_N$ contains permutations acting transitively on
$\{w_1,w_2,w_3\}$ while preserving this set, $\phi^{(N)}$ satisfies Definition~\ref{df31}.

Let $g_N(x)=g_{\phi^{(N)}}(x)$ be as in \eqref{dgx}.  Consider SAWs in $K_N$ that start at the
mid-edge of the edge attached to $w_1$ and end at the mid-edge attached to $w_2$.
In the mid-edge convention, such a walk visits a sequence of vertices
\[
w_1=v_1, v_2,\dots,v_m=w_2,
\]
with all $v_i$ distinct. Since $K_N$ is complete, every such sequence is a SAW. Hence, for
each $m\in\{2,3,\dots,N\}$ the number of $m$-step gadget SAWs equals the number of ordered
choices of the $(m-2)$ internal vertices from the $(N-2)$ available vertices
$V(K_N)\setminus\{w_1,w_2\}$, namely
\[
a_m = (N-2)(N-3)\cdots (N-m+1) = \frac{(N-2)!}{(N-m)!}.
\]
Therefore
\begin{equation}\label{eq:gN}
g_N(x)=\sum_{m=2}^{N} a_m x^m
=\sum_{m=2}^{N} \frac{(N-2)!}{(N-m)!}\,x^m
= x^2\sum_{k=0}^{N-2} (N-2)_k\, x^k,
\end{equation}
where $(N-2)_k=(N-2)(N-3)\cdots (N-1-k)$ is the falling factorial.

By Theorem~\ref{thm:main2}, if $G$ is cubic with connective constant $\mu$ and
$G_1=\phi^{(N)}(G)$ has connective constant $\mu_1$, then
\[
\mu^{-1}=g_N(\mu_1^{-1}),
\]
and $\mu_1^{-1}$ is the unique solution $x\in(0,1)$ of $g_N(x)=\mu^{-1}$.

See Figure~\ref{fig:K7gadget} for an example of a $K_N$ gadget with $N=7$.
\end{example}

\begin{remark}[A worked base graph: the $3$-regular tree]\label{rmk:T3}
Let $T_3$ denote the $3$-regular tree. Then $\mu(T_3)=2$ (indeed $\sigma_n=3\cdot 2^{n-1}$).
For $G_1=\phi^{(N)}(T_3)$, Theorem~\ref{thm:main2} yields $g_N(\mu_1^{-1})=\tfrac12$.
For instance,
\[
\mu_1 \approx 2.16730\ (N=4),\qquad
\mu_1 \approx 2.60068\ (N=5),\qquad
\mu_1 \approx 3.55052\ (N=7),
\]
obtained by solving $g_N(x)=\tfrac12$ for $x\in(0,1)$ and setting $\mu_1=1/x$.
This illustrates how complete-graph gadgets produce explicit, finite polynomial equations
for new connective constants.
\end{remark}

\begin{figure}[t]
\centering
\begin{tikzpicture}[scale=1.3, line cap=round, line join=round]
  \coordinate (Aout) at (-1.35,0.75);
  \coordinate (Bout) at ( 1.35,0.75);
  \coordinate (Cout) at ( 0,-1.45);

  \coordinate (a) at (-0.825,0.525);
  \coordinate (b) at ( 0.825,0.525);
  \coordinate (c) at ( 0,-0.975);
  \coordinate (dNE) at (0.3,0.4);
  \coordinate (dSE) at (0.3,-0.2);
  \coordinate (dSW) at (-0.3,-0.2);
  \coordinate (dNW) at (-0.3,0.4);

  \draw (a)--(b)--(c)--cycle;
  \draw (dNE)--(dSE)--(dSW)--(dNW)--cycle;
  \draw (dNE)--(dSW);
  \draw (dNW)--(dSE);
  \draw (dNE)--(a) (dNE)--(b) (dNE)--(c);
  \draw (dSE)--(a) (dSE)--(b) (dSE)--(c);
  \draw (dSW)--(a) (dSW)--(b) (dSW)--(c);
  \draw (dNW)--(a) (dNW)--(b) (dNW)--(c);
  \draw (a)--(Aout);
  \draw (b)--(Bout);
  \draw (c)--(Cout);

  \fill (a) circle (1.1pt);
  \fill (b) circle (1.1pt);
  \fill (c) circle (1.1pt);
  \fill (dNE) circle (1.1pt);
  \fill (dSE) circle (1.1pt);
  \fill (dSW) circle (1.1pt);
  \fill (dNW) circle (1.1pt);

  \node[above left]  at (Aout) {\small $A$};
  \node[above right] at (Bout) {\small $B$};
  \node[below]       at (Cout) {\small $C$};

  \node[above] at (0,0.95) {\small $K_7$ gadget};
\end{tikzpicture}
\caption{A local transformation consisting of a $K_7$ gadget.
In this case, $g_7(x)=x^2+5x^3+20x^4+60x^5+120x^6+120x^7$.}
\label{fig:K7gadget}
\end{figure}
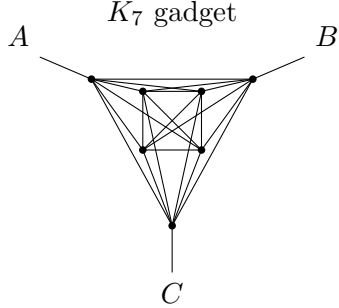

\begin{example}[A planar six-vertex ``triangle-in-triangle'' gadget]\label{ex:tri6}
Let $\phi^{\triangle}$ be the local transformation whose gadget $T$ is the following
$6$-vertex graph (see Figure~\ref{fig:tri6gadget}).
The gadget has three \emph{ports} (outer vertices) $w_1,w_2,w_3$.
On each side $w_iw_j$ of the outer triangle we insert one vertex $u_{ij}$, so the boundary
edges are
\[
w_1-u_{12}-w_2,\qquad w_2-u_{23}-w_3,\qquad w_3-u_{31}-w_1,
\]
and we add the three edges forming the inner triangle
\[
u_{12}u_{23},\qquad u_{23}u_{31},\qquad u_{31}u_{12}.
\]
The dihedral symmetries act transitively on $\{w_1,w_2,w_3\}$, so $\phi^{\triangle}$
satisfies Definition~\ref{df31}.

Let $g_{\triangle}(x):=g_{\phi^{\triangle}}(x)$ be defined by \eqref{dgx}.  A gadget-SAW
from the port-edge at $w_1$ to that at $w_2$ is exactly a simple path from $w_1$ to $w_2$
inside $T$ (mid-edge convention).  Enumerating simple paths by their number of visited
vertices gives
\[
g_{\triangle}(x)=x^3+3x^4+4x^5+2x^6.
\]
Therefore, for any infinite connected quasi-transitive cubic graph $G$ with connective
constant $\mu$ and $G_1=\phi^{\triangle}(G)$ with connective constant $\mu_1$, Theorem~\ref{thm:main2}
yields the exact relation
\[
\mu^{-1}=g_{\triangle}\!\bigl(\mu_1^{-1}\bigr).
\]

\smallskip
\noindent\emph{Hexagonal lattice.}
If $G=\mathbb H$ is the hexagonal lattice, then $\mu(\mathbb H)=\sqrt{2+\sqrt 2}$ \cite{ds}, so
$\mu(\phi^{\triangle}(\mathbb H))^{-1}\in(0,1)$ is the unique solution of
\[
x^3+3x^4+4x^5+2x^6=\bigl(2+\sqrt 2\bigr)^{-1/2}.
\]
(Numerically, this gives $\mu(\phi^{\triangle}(\mathbb H))\approx 1.9318771$.)
\end{example}

\begin{figure}[t]
\centering
\begin{tikzpicture}[scale=1.25, line cap=round, line join=round]
  \coordinate (W1) at (-1.6,0.8);
  \coordinate (W2) at ( 1.6,0.8);
  \coordinate (W3) at ( 0.0,-1.55);

  \coordinate (U12) at ( 0.0,0.8);
  \coordinate (U23) at ( 0.8,-0.375);
  \coordinate (U31) at (-0.8,-0.375);

  \coordinate (Aout) at (-2.35,1.20);
  \coordinate (Bout) at ( 2.35,1.20);
  \coordinate (Cout) at ( 0.0,-2.35);

  \draw (W1)--(U12)--(W2);
  \draw (W2)--(U23)--(W3);
  \draw (W3)--(U31)--(W1);

  \draw (U12)--(U23)--(U31)--cycle;

  \draw (W1)--(Aout);
  \draw (W2)--(Bout);
  \draw (W3)--(Cout);

  \fill (W1) circle (1.25pt);
  \fill (W2) circle (1.25pt);
  \fill (W3) circle (1.25pt);
  \fill (U12) circle (1.25pt);
  \fill (U23) circle (1.25pt);
  \fill (U31) circle (1.25pt);

  \node[left]  at (Aout) {\small $A$};
  \node[right] at (Bout) {\small $B$};
  \node[below] at (Cout) {\small $C$};

  \node[above] at (0,1.25) {\small triangle-in-triangle gadget};
\end{tikzpicture}
\caption{A planar $6$-vertex gadget with outer ports $w_1,w_2,w_3$ and an inner triangle supported on the three sides. For this gadget,
$g_{\triangle}(x)=x^3+3x^4+4x^5+2x^6$.}
\label{fig:tri6gadget}
\end{figure}
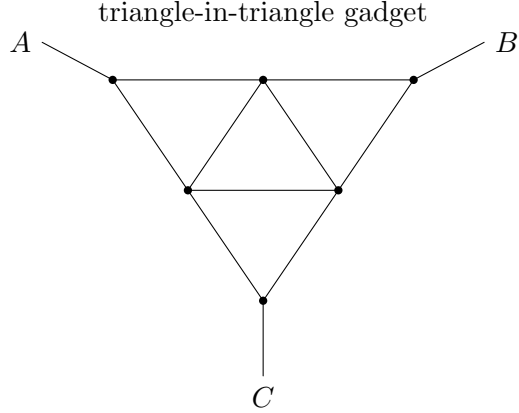

\begin{example}[A mixed bipartite transformation on the hexagonal lattice]\label{ex:hex-mixed}
Let $H$ be the hexagonal lattice with its natural bipartition $V_{\mathrm w}\cup V_{\mathrm b}$
(white/black).  Apply $\phi^{(4)}$ at each white vertex (a $K_4$ gadget), and apply the Fisher
triangle transformation at each black vertex (i.e.\ a $K_3$ gadget).  Denote the resulting
graph by $H_{\mathrm e}$.

For the Fisher triangle one has
\[
g_{K_3}(x)=x^2+x^3,
\]
and for the $K_4$ gadget one has $g_{K_4}(x)=x^2+2x^3+2x^4$.
By Theorem~\ref{thm:main3}(2),
\[
\mu(H)^{-2}=g_{K_4}\bigl(\mu(H_{\mathrm e})^{-1}\bigr)\,g_{K_3}\bigl(\mu(H_{\mathrm e})^{-1}\bigr).
\]
Since $\mu(H)^{-2}=(2+\sqrt2)^{-1}=\tfrac12(2-\sqrt2)$, writing $x:=\mu(H_{\mathrm e})^{-1}$ yields
the explicit polynomial equation
\begin{equation}\label{eq:hex-mixed-eqn}
x^4+3x^5+4x^6+2x^7=\tfrac12(2-\sqrt2),
\qquad x\in(0,1).
\end{equation}
Thus $\mu(H_{\mathrm e})=1/x \approx 1.91529$.

\smallskip
The two gadgets used here are depicted in Figure~\ref{fig:K3K4}.
\end{example}

\begin{figure}[t]
\centering
\setlength{\tabcolsep}{28pt}
\renewcommand{\arraystretch}{1.0}
\begin{tabular}{cc}

\begin{tikzpicture}[scale=1.15, line cap=round, line join=round]
  \coordinate (Aout) at (-1.85,1.10);
  \coordinate (Bout) at ( 1.85,1.10);
  \coordinate (Cout) at ( 0,-2.05);

  \coordinate (a) at (-0.95,0.55);
  \coordinate (b) at ( 0.95,0.55);
  \coordinate (c) at ( 0,-1.10);

  \draw (a)--(b)--(c)--cycle;
  \draw (a)--(Aout);
  \draw (b)--(Bout);
  \draw (c)--(Cout);

  \fill (a) circle (1.4pt);
  \fill (b) circle (1.4pt);
  \fill (c) circle (1.4pt);

  \node[above left]  at (Aout) {\small $A$};
  \node[above right] at (Bout) {\small $B$};
  \node[below]       at (Cout) {\small $C$};

  \node[above] at (0,0.95) {\small $K_3$ (Fisher triangle)};
\end{tikzpicture}

&

\begin{tikzpicture}[scale=1.15, line cap=round, line join=round]
  \coordinate (Aout) at (-1.85,1.10);
  \coordinate (Bout) at ( 1.85,1.10);
  \coordinate (Cout) at ( 0,-2.05);

  \coordinate (a) at (-0.95,0.55);
  \coordinate (b) at ( 0.95,0.55);
  \coordinate (c) at ( 0,-1.10);
  \coordinate (d) at ( 0,-0.05);

  \draw (a)--(b)--(c)--cycle; 
  \draw (d)--(a) (d)--(b) (d)--(c); 

  \draw (a)--(Aout);
  \draw (b)--(Bout);
  \draw (c)--(Cout);

  \fill (a) circle (1.4pt);
  \fill (b) circle (1.4pt);
  \fill (c) circle (1.4pt);
  \fill (d) circle (1.4pt);

  \node[above left]  at (Aout) {\small $A$};
  \node[above right] at (Bout) {\small $B$};
  \node[below]       at (Cout) {\small $C$};

  \node[above] at (0,0.95) {\small $K_4$ gadget};
\end{tikzpicture}

\end{tabular}
\caption{The two gadgets used in Example~\ref{ex:hex-mixed}: a Fisher triangle ($K_3$) and a $K_4$ gadget with ports $A,B,C$.}
\label{fig:K3K4}
\end{figure}
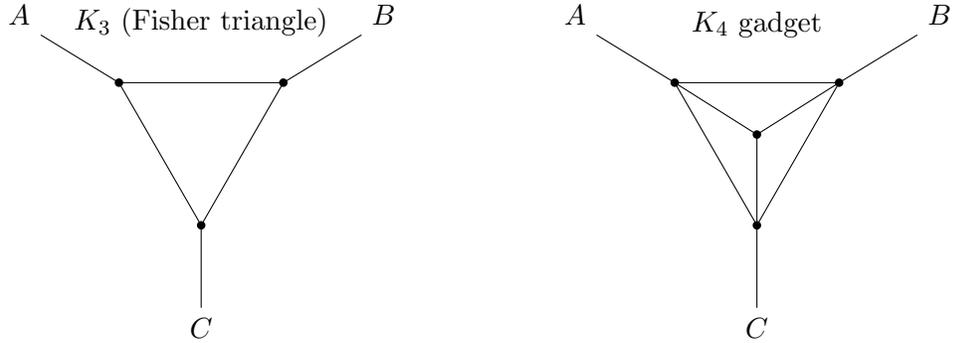

\begin{example}[Generalized Fisher transformations]\label{ex:GFT}
Let $H$ be a finite connected graph with two distinct distinguished vertices $v_1,v_2$.
Let $H^{(1)},H^{(2)},H^{(3)}$ be three disjoint copies of $H$, and write
$v_{1}^{(i)},v_{2}^{(i)}$ for the copies of $v_1,v_2$ in $H^{(i)}$.
Form a graph $\widetilde H$ by identifying
\[
v_{2}^{(i)} \equiv v_{1}^{(i+1)} \qquad (i \in \mathbb Z/3\mathbb Z).
\]
Let the three ports (outer vertices) be $w_i:=v_{1}^{(i)}$ for $i=1,2,3$.
Then $\widetilde H$ is a gadget: the cyclic permutation of the three copies induces
automorphisms acting transitively on $\{w_1,w_2,w_3\}$.
Replacing every degree-$3$ vertex of a cubic graph $G$ by $\widetilde H$ (attaching the three
incident edges to $w_1,w_2,w_3$) defines a local transformation, denoted $\phi^H$.
When $H$ is a single edge, $\phi^H$ is the usual Fisher transformation.

To describe the corresponding series $g_H(x):=g_{\phi^H}(x)$, let $H'$ be obtained from $H$
by attaching one extra leaf-edge at each of $v_1$ and $v_2$, and let $e_1,e_2$ be these two new edges.
Define the two-terminal SAW series
\[
f_H(x):=\sum_{\pi\in\Sigma_{H',e_1,e_2}} x^{|\pi|}.
\]
Then
\begin{equation}\label{eq:gH}
g_H(x)=f_H(x)+x^{-1}\,f_H(x)^2.
\end{equation}
Indeed, a gadget-SAW from the port attached to $w_1$ to that attached to $w_2$
either avoids $w_3$ (hence lies in a single copy of $H$ and contributes $f_H$),
or visits $w_3$ (hence splits into two such walks, with $w_3$ counted twice, giving the factor $x^{-1}$).

Consequently, if $G$ has connective constant $\mu$ and $\phi^H(G)$ has connective constant $\mu_H$,
then by Theorem~\ref{thm:main2},
\[
\mu^{-1}=g_H(\mu_H^{-1}),
\qquad\text{and }\ \mu_H^{-1}\in(0,1)\ \text{is the unique solution of }\ g_H(x)=\mu^{-1}.
\]

\smallskip
\noindent\emph{Subexamples.}
\begin{enumerate}
\item If $H$ is the path on $N\ge2$ vertices with endpoints $v_1,v_2$, then $f_H(x)=x^N$ and
\[
g_H(x)=x^N+x^{2N-1}.
\]
\item If $H$ is the cycle $C_N$ ($N\ge3$) and the graph-distance between $v_1,v_2$ along the shorter arc is $k$,
then
\[
f_H(x)=x^{k+1}+x^{N-k+1},
\qquad
g_H(x)=f_H(x)+x^{-1}f_H(x)^2.
\]
\end{enumerate}
\end{example}

\begin{example}[Compositions of local transformations]\label{ex:compositions}
Let $G$ be a cubic graph. Suppose $\phi_1$ is a local transformation such that
$G_1:=\phi_1(G)$ is again cubic, and let $\phi_2$ be a local transformation on $G_1$ with
$G_2:=\phi_2(G_1)$.
Write $g_i(x):=g_{\phi_i}(x)$ and $\mu_i:=\mu(G_i)$.
By Theorem~\ref{thm:main2},
\[
\mu_0^{-1}=g_1(\mu_1^{-1}),\qquad \mu_1^{-1}=g_2(\mu_2^{-1}),
\]
hence
\[
\mu_0^{-1}=(g_1\circ g_2)(\mu_2^{-1}).
\]
Equivalently, $\mu_2^{-1}\in(0,1)$ is the unique solution of $(g_1\circ g_2)(x)=\mu_0^{-1}$.

More generally, for a sequence $\phi_1,\dots,\phi_r$ such that $G_i:=\phi_i(G_{i-1})$ is cubic for
$1\le i\le r$, one has
\[
\mu_0^{-1}=(g_1\circ\cdots\circ g_r)(\mu_r^{-1}),
\qquad
g_{\phi_r\circ\cdots\circ \phi_1}=g_1\circ\cdots\circ g_r.
\]
\end{example}

\begin{example}[A ``hub--triangle'' gadget and an explicit iteration picture]\label{ex:hub-triangle}
Define a local transformation $\psi$ by replacing each degree-$3$ vertex by the gadget obtained as follows:
start from a triangle on outer vertices $w_1,w_2,w_3$ (the ports), subdivide each of the three triangle edges by
inserting a new vertex at its midpoint, and finally add a central vertex connected to the three midpoint vertices.
(See Figure~\ref{fig:triHubFixedPoint}.)  This gadget is port-transitive by the dihedral symmetry of the construction.

A direct enumeration of gadget SAWs (in the mid-edge convention of Section~\ref{sec:notation}) yields
\begin{equation}\label{eq:ghub}
g_\psi(x)=x^3+4x^5+2x^7.
\end{equation}
In particular, $g_\psi$ is strictly increasing and strictly convex on $(0,1)$ and satisfies $g_\psi(1)=7$.

If $G_n:=\psi^n(G)$ remains cubic for all $n\ge0$ and $x_n:=\mu(G_n)^{-1}$, then Theorem~\ref{thm:main2} gives
$x_{n-1}=g_\psi(x_n)$ for $n\ge1$.
Since $g_\psi$ is strictly convex with $g_\psi(0)=0$ and $g_\psi'(0)=0$, the equation $g_\psi(x)=x$ has a unique solution
$x_\ast\in(0,1)$, and the sequence $(x_n)$ is monotone and converges to $x_\ast$ (cf.\ Remark~\ref{rmk:iterate} below).
Numerically, $x_\ast\approx 0.6050003$, hence the limiting connective constant is $1/x_\ast\approx 1.65289$.
\end{example}

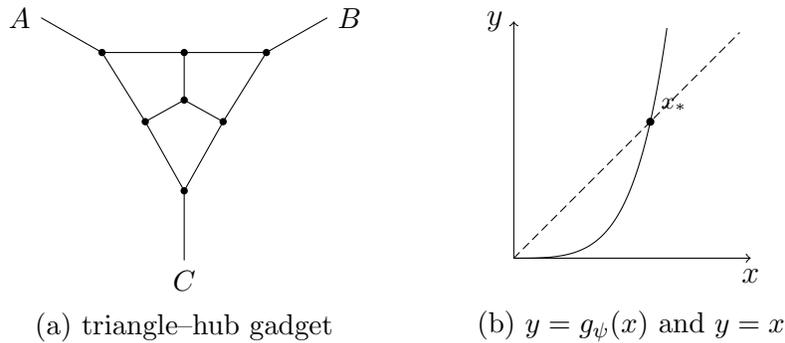
\begin{figure}[t]
\centering
\setlength{\tabcolsep}{18pt}
\renewcommand{\arraystretch}{1.0}
\begin{tabular}{cc}

\begin{tikzpicture}[scale=1.15, line cap=round, line join=round]
  \coordinate (Aout) at (-1.65, 0.95);
  \coordinate (Bout) at ( 1.65, 0.95);
  \coordinate (Cout) at ( 0.00,-1.85);

  \coordinate (a) at (-0.95, 0.55);
  \coordinate (b) at ( 0.95, 0.55);
  \coordinate (c) at ( 0.00,-1.05);

  \coordinate (ab) at ( 0.00, 0.55);
  \coordinate (bc) at ( 0.45,-0.25);
  \coordinate (ca) at (-0.45,-0.25);

  \coordinate (d) at (0.00, 0.00);

  \draw (a)--(ab)--(b)--(bc)--(c)--(ca)--(a);

  \draw (d)--(ab) (d)--(bc) (d)--(ca);

  \draw (a)--(Aout);
  \draw (b)--(Bout);
  \draw (c)--(Cout);

  \foreach \p in {a,b,c,ab,bc,ca,d} \fill (\p) circle (1.2pt);

  \node[left]  at (Aout) {\small $A$};
  \node[right] at (Bout) {\small $B$};
  \node[below] at (Cout) {\small $C$};

  \node[below=14pt] at (0,-1.9) {\small (a) triangle--hub gadget};
\end{tikzpicture}

&

\begin{tikzpicture}[scale=3.0, line cap=round, line join=round]
  \draw[->] (0,0) -- (1.05,0) node[below] {$x$};
  \draw[->] (0,0) -- (0,1.05) node[left] {$y$};

  \begin{scope}
    \clip (0,0) rectangle (1.02,1.02);

    \draw[densely dashed] (0,0) -- (1,1);

    \draw[domain=0:1, samples=220, smooth]
      plot (\x,{(\x)^3 + 4*(\x)^5 + 2*(\x)^7});
  \end{scope}

  \fill (0.6050,0.6050) circle (0.018);
  \node[above right] at (0.6050,0.6050) {\scriptsize $x_\ast$};

  \node[below=14pt] at (0.52,-0.02) {\small (b) $y=g_\psi(x)$ and $y=x$};
\end{tikzpicture}

\end{tabular}

\caption{(a) The triangle--hub gadget used to define the local transformation $\psi$.
(b) The fixed point $x_\ast$ solving $g_\psi(x)=x$ for $g_\psi(x)=x^3+4x^5+2x^7$. So $x_\ast=\sqrt{\frac{\sqrt3-1}{2}}\approx 0.6050003$,
and
$\mu(\psi^n(G))\longrightarrow \mu_\ast:=x_\ast^{-1}\approx 1.6528917$.}
\label{fig:triHubFixedPoint}
\end{figure}

\begin{remark}[Iterating a fixed transformation]\label{rmk:iterate}
Assume $\phi$ is a local transformation such that $G_n:=\phi^n(G)$ is cubic for all $n\ge0$.
Let $x_n:=\mu(G_n)^{-1}$ and $g(x):=g_\phi(x)$.
Then Theorem~\ref{thm:main2} gives
\[
x_{n-1}=g(x_n)\qquad(n\ge1),
\]
so $x_n=g^{-1}(x_{n-1})$.

Since $g$ is a polynomial with nonnegative coefficients and no constant/linear term
(in particular $g(0)=g'(0)=0$), it is strictly increasing on $(0,1]$ and hence invertible
on its image.

Assume in addition that $g$ is strictly convex on $(0,1)$ and that $g(1)\ge 1$.
Then $h(x):=g(x)-x$ is strictly convex with $h(0)=0$, $h'(0)=-1$, and $h(1)\ge0$,
so there exists a unique $x_\ast\in(0,1]$ such that $g(x_\ast)=x_\ast$.
Moreover, $(x_n)$ is monotone (increasing if $x_0<x_\ast$ and decreasing if $x_0>x_\ast$)
and converges to $x_\ast$.
\end{remark}

\section*{Acknowledgements}
The authors thank the organizers of the conference “Perfectly matched perspectives on statistical mechanics, combinatorics and geometry” (CIRM, 16–20 June 2025) for a stimulating meeting. The conference was supported by the National Science Foundation under Grant DMS-2512154. B.G.\ was partially supported by a U.S.\ Department of Education GAANN Fellowship.

\bibliography{fisher2}
\bibliographystyle{amsplain}

\end{document}